\documentclass[reqno]{amsart}%
\usepackage{amsfonts, amsthm}
\usepackage{eurosym}
\usepackage{amsmath}
\usepackage{amssymb}
\usepackage{graphicx}
\usepackage{xcolor}
\usepackage{amsfonts}
\usepackage{enumitem}
\usepackage{subcaption}
\usepackage{float}
\usepackage{caption}%
\providecommand{\U}[1]{\protect\rule{.1in}{.1in}}
\providecommand{\U}[1]{\protect\rule{.1in}{.1in}}
\newtheorem{theorem}{Theorem}[section]
\theoremstyle{plain}

\newtheorem{lemma}[theorem]{Lemma}

\newtheorem{remark}{Remark}[section]
\numberwithin{equation}{section}

\begin{document}
\title[$L^{p}$-Convergence and Denoising of Max-Min NN Operators]{On the $L^{p}$-Convergence and Denoising Performance of Durrmeyer-Type Max-Min
Neural Network Operators}
\author[BERKE \c{S}AH\.{I}N, \.{I}SMA\.{I}L ASLAN]{BERKE \c{S}AH\.{I}N, \.{I}SMA\.{I}L ASLAN${^{\ast}}$}
\thanks{2020 \textit{Mathematics Subject Classification. } 41A30, 41A25}
\thanks{$^{\ast}$Corresponding author.}
\keywords{neural network operators, Durrmeyer-type Operators, nonlinear operators,
sigmoidal functions, rate of approximation, signal filtering}

\begin{abstract}
In this paper, we investigate Durrmeyer-type generalizations of
maximum-minimum neural network operators. The primary objective of this study
is to establish the convergence of these operators in the $L^{p}$ norm for
functions $f\in L^{p}([a,b],[0,1])$ with $1\leq p<\infty$. To this end, we
analyze the properties of sigmoidal functions and maximum-minimum operations,
subsequently establishing the convergence of the proposed operator in
pointwise, supremum, and $L^{p}$ norms. Furthermore, we derive quantitative
estimates for the rates of convergence. In the applications section, numerical
and graphical examples demonstrate that the proposed Durrmeyer-type operators
provide smoother approximations compared to Kantorovich-type and standard
max-min operators. Finally, we highlight the superior filtering performance of
these operators in signal analysis, validating their effectiveness in both
approximation and data processing tasks.

\end{abstract}
\maketitle

\section{Introduction}

In approximation theory, neural network-based operators have gained
significant importance in both theoretical and applied fields, particularly
over the past decade, and have been extensively studied \cite{
anas7,is,aslan2,durrmeyer, cost1, cost25, cheang, cao,cao2}. Initial studies
primarily focused on linear neural network operators, and the convergence
properties of these operators in various function spaces were examined in
detail \cite{costez,cost1,coroianu4,kad}. For instance, Costarelli and Spigler
in \cite{cost1} introduced the following linear positive neural network
operators activated by sigmoidal functions and demonstrated that these
operators exhibit proper convergence properties for continuous functions.%

\begin{equation}
F_{n}\left(  f;x\right)  :=\dfrac{\sum\limits_{k=\left\lceil na\right\rceil
}^{\left\lfloor nb\right\rfloor }f\left(  \frac{k}{n}\right)  \phi_{\sigma
}\left(  nx-k\right)  }{\sum\limits_{d=\left\lceil na\right\rceil
}^{\left\lfloor nb\right\rfloor }\phi_{\sigma}\left(  nx-d\right)  }\text{
\ \ (}x\in\left[  a,b\right]  \text{)}, \label{1}%
\end{equation}
where $\phi_{\sigma}$ is an appropriate linear combination of activation
functions $\sigma:\mathbb{R}\rightarrow\mathbb{R}$ and $f:\left[  a,b\right]
\rightarrow\mathbb{R}$ is a bounded function. In the above definition,
$\left\lfloor \cdot\right\rfloor $ and $\left\lceil \cdot\right\rceil $
represent the floor and the ceiling function for a given number. However,
certain limitations of linear operators have led researchers
\cite{anas7,is,cost25} to explore nonlinear forms. In this context, the
concept of pseudo-linear operators, first introduced by Bede et al
\cite{bede1, bede2}, has been developed by redefining the operations of
addition and multiplication in a different algebraic structure. They
demonstrated that these operators often outperform classical approximation
operators that rely on standard sum and product operations. These studies have
formed the basis for much of the contemporary research on pseudo-linear
operators, leading to numerous publications in this field \cite{bede3,
bede4,bede5,coroianu1,coroianu2,coroianu3,duman,aslan,gokcer2,holhos1}.
Costarelli and Vinti in \cite{cost2}, on the other hand, introduced
max-product variants of neural network operators and proved the uniform
convergence properties of these operators.

Later, in \cite{is} Aslan examined the convergence properties of max-min
neural network operators with sigmoidal activation functions, addressing their
convergence, order of approximation, and comparative performance with other
operators. It has been demonstrated that the max-min variant provides a better
approximation compared to linear operators in certain cases. Notably, max-min
operators exhibit a nonlinear and non-homogeneous structure. Furthermore, due
to the nature of the maximum operation, computation times are significantly
lower compared to linear operators. They are also effective in processing
noisy signals and exhibit better performance compared to max-product versions
(see \cite{is}). The max-min neural network operator studied in \cite{is} is
given as follows :%

\begin{equation}
F_{n}^{(m)}(f;x):=\bigvee\limits_{k=\left\lceil na\right\rceil }^{\left\lfloor
nb\right\rfloor }f\left(  \frac{k}{n}\right)  \wedge\frac{\phi_{\sigma}\left(
nx-k\right)  }{\bigvee\limits_{d=\left\lceil na\right\rceil }^{\left\lfloor
nb\right\rfloor }\phi_{\sigma}\left(  nx-d\right)  }\text{ \ \ (}x\in\left[
a,b\right]  \text{)},\label{4}%
\end{equation}
such that $\bigvee\nolimits_{i=1}^{k}x_{i}:=\max\nolimits_{i=1,\cdots
,k}\{x_{i}\}$ and $A\wedge B=\min\{A,B\}$. Eventually, in \cite{aslan2} Aslan,
De Marchi and Erb introduced the Kantorovich variant of (\ref{4}). The study
establishes their $L^{p}$-convergence for $1\leq p<\infty$, provides
quantitative estimates for the rate of approximation, and includes numerical
experiments illustrating their performance on discontinuous and noisy
functions. Moreover, a comparative analysis demonstrates that the Kantorovich
max-min operators outperform several classical neural network operators,
especially in approximating biomedical signals such as ECG data. The operator
is presented as follows,
\[
K_{n}^{(m)}(f;x):=\bigvee\limits_{k=\left\lceil na\right\rceil }^{\left\lfloor
nb\right\rfloor -1}n\int\limits_{\frac{k}{n}}^{\frac{k+1}{n}}f(u)du\wedge
\frac{\phi_{\sigma}\left(  nx-k\right)  }{\bigvee\limits_{d=\left\lceil
na\right\rceil }^{\left\lfloor nb\right\rfloor -1}\phi_{\sigma}\left(
nx-d\right)  }\text{ \ \ (}x\in\left[  a,b\right]  \text{)}.
\]
However, Durrmeyer-type generalizations of neural network operators have not
yet been sufficiently investigated in the literature (\cite{durrmeyer,wu}).
Durrmeyer variants provide operators with a more general integral-based
structure, enabling convergence results to be obtained, particularly in
weighted $L^{p}$-spaces and broader function classes.

Let $f:$ $[a,b]\rightarrow%
\mathbb{R}
$ be a bounded and $L_{1}$-integrable function. Then, for $x\in\left[
a,b\right]  $, the linear Durrmeyer-type neural network operators studied in
\cite{durrmeyer} are given by
\begin{equation}
D_{n}\left(  f;x\right)  :=\frac{\sum\limits_{k=na}^{nb-1}\left[
n\int\limits_{a}^{b}\chi(nt-k)f(t)dt\right]  \phi_{\sigma}\left(  nx-k\right)
}{\sum\limits_{k=na}^{nb-1}\left[  n\int\limits_{a}^{b}\chi(nt-k)dt\right]
\phi_{\sigma}\left(  nx-k\right)  }\label{3}%
\end{equation}
such that $n\in%
\mathbb{N}
^{+}$, where $\chi:\mathbb{R}\rightarrow\lbrack0,\infty)$ satisfies some basic
conditions. Motivated by the aforementioned operator and the studies in
\cite{is,aslan2}, we investigate the nonlinear and non-homogeneous variants of
these operators. The primary objective of this study is to introduce a
Durrmeyer-type generalization of Kantorovich-type max-min neural network
operators and to thoroughly examine their convergence properties in $L^{p}%
$-spaces. In this context, this article will first introduce Durrmeyer-type
max-min neural network operators and investigate their fundamental properties.
Durrmeyer operators provide a stronger smoothing effect compared to their
Kantorovich versions, which makes them particularly suitable for approximation
processes requiring higher regularity. Subsequently, the convergence behavior
of these operators in $L^{p}$-spaces will be analyzed, and quantitative
estimates for their convergence rates will be derived.

In the application part of our study, by selecting appropriate kernel
functions, it will be visually demonstrated that the Durrmeyer-type max-min
neural network operators provide a smoother approximation compared to both the
max-min Kantorovich and the standard max-min forms. Moreover, it is known that
max-min type neural network operators exhibit a better filtering effect in
signal analysis than max-product operators \cite{is}. In this study, we also
show that Durrmeyer-type max-min neural network operators possess superior
filtering properties compared to their Kantorovich-type counterparts.
Additionally, we analyze the filtering properties of our operator on speech
signals contaminated by salt and pepper noise.

\section{Preliminaries on Neural Networks and Pseudo-Linear Operators}

In this section, we recall below the main notions and notations related to NN
operators, which will be used throughout this section.

Let $\chi: \mathbb{R} \to[0,\infty)$ be a bounded and $L_{1}$-integrable
function such that\\ $\int_{0}^{1}\chi(u)du = \mathcal{A} > 0$ and its discrete
absolute moment of order zero is finite, that is,%

\[
M_{0}(\chi):=\sup_{t\in\mathbb{R}}\sum\limits_{k\in%
\mathbb{Z}
}\chi(t-k)<\infty.
\]

In addition, for any $\upsilon>0$ the continuous absolute moment of order
$\upsilon$ is defined as follows,%

\[
\tilde{M}_{\upsilon}(\chi):=\int\limits_{\mathbb{R}}\chi(u)|u|^{\upsilon}dt,
\]
where we assume in this paper that $\tilde{M}_{1}(\chi)<\infty$.

Let $\sigma:\mathbb{R}\rightarrow\mathbb{R}$ be given. Then $\sigma$ is called
a sigmoidal function if $\lim_{x\rightarrow-\infty}\sigma\left(  x\right)  =0$
and $\lim_{x\rightarrow\infty}\sigma\left(  x\right)  =1.$ For the rest of the
paper, we assume that $\sigma$ is a nondecreasing sigmoidal function. We also
assume that $\sigma\left(  3\right)  >\sigma\left(  1\right)  $, which is a
minor restriction put in place to prevent certain technical issues.

Together with above definitions, the following conditions are also needed.

\begin{enumerate}
\item[$\left(  \Sigma1\right)  $] $\sigma\left(  x\right)  -1/2$ is an odd
function on the real line,

\item[$\left(  \Sigma2\right)  $] $\sigma\in C^{2}\left(  \mathbb{R}\right)  $
is concave for all $x\in\mathbb{R}_{0}^{+},$

\item[$\left(  \Sigma3\right)  $] $\sigma\left(  x\right)  =O(\left\vert
x\right\vert ^{-(1+\alpha)})$ as $x\rightarrow-\infty$ for some $\alpha>0.$
\end{enumerate}

For the rest of this paper, we use the \textquotedblleft$\alpha$%
\textquotedblright\ symbol exclusively in connection to the condition
specified in $\left(  \Sigma3\right)  $. The kernel $\phi_{\sigma}$ (also
called \textquotedblleft centered bell shaped function\textquotedblright\ in
\cite{carda})\ in the definition of the neural network operator is given by%
\[
\phi_{\sigma}\left(  x\right)  :=\frac{1}{2}\left(  \sigma\left(  x+1\right)
-\sigma\left(  x-1\right)  \right)  \text{ \ for all }x\in\mathbb{R}\text{.}%
\]
Additionally, we can also say that $\phi_{\sigma}$ does not have to be
compactly supported. We need some additional definitions and assumptions. From
the definitions and assumptions given above, we can derive the following
properties for $\phi_{\sigma}$ (see\cite{cost1}).

\begin{lemma}
\label{lemma1}

\begin{enumerate}
\item $\phi_{\sigma}\left(  x\right)  \geq0$ $(\forall x\in\mathbb{R})$ and
$\phi_{\sigma}\left(  2\right)  >0$,

\item $\lim_{x\rightarrow\pm\infty}\phi_{\sigma}\left(  x\right)  =0$,

\item $\phi_{\sigma}$ is nondecreasing if $x<0$ and nonincreasing if $x\geq0$
$($therefore $\phi_{\sigma}\left(  0\right)  \geq\phi_{\sigma}\left(
x\right)  $ $\forall x\in\mathbb{R)},$

\item $\phi_{\sigma}\left(  x\right)  =O(\left\vert x\right\vert ^{-\left(
1+\alpha\right)  })$ as $x\rightarrow\pm\infty$, i.e., there exist two
constants $A,B>0$ such that $\phi_{\sigma}\left(  x\right)  \leq A\left\vert
x\right\vert ^{-\left(  1+\alpha\right)  }$ whenever $\left\vert x\right\vert
>B.$

\item $\phi_{\sigma}\left(  x\right)  $ is an even function.

\item $\forall x\in\mathbb{R},$ $\sum\limits_{k\in\mathbb{Z}}\phi_{\sigma
}\left(  x-k\right)  =1.$
\end{enumerate}
\end{lemma}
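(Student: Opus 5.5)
The six items follow directly from the definition $\phi_{\sigma}(x)=\frac12(\sigma(x+1)-\sigma(x-1))$ and the standing assumptions: $\sigma$ is nondecreasing and sigmoidal, $\sigma(3)>\sigma(1)$, and $(\Sigma1)$--$(\Sigma3)$ hold. I would prove them in the order (1), (2), (5), (3), (4), (6), since (5) simplifies (3) and (4).

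\emph{Items (1) and (2).} Since $\sigma$ is nondecreasing and $x+1>x-1$, we get $\phi_{\sigma}(x)\geq0$. Moreover, $\phi_{\sigma}(2)=\frac12(\sigma(3)-\sigma(1))>0$ by the assumption $\sigma(3)>\sigma(1)$. For the limits, as $x\to\pm\infty$ both $\sigma(x+1)$ and $\sigma(x-1)$ tend to the same limit, $1$ or $0$, so their difference tends to $0$.

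\emph{Item (5).} By $(\Sigma1)$, $\sigma(-y)=1-\sigma(y)$ for every $y$. Hence
\[
\phi_{\sigma}(-x)=\tfrac12\bigl(\sigma(1-x)-\sigma(-x-1)\bigr)=\tfrac12\bigl(1-\sigma(x-1)-1+\sigma(x+1)\bigr)=\phi_{\sigma}(x).
\]

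\emph{Item (3).} By evenness it suffices to show that $\phi_{\sigma}$ is nonincreasing on $[0,\infty)$. Since $\sigma\in C^{2}$, we have $\phi_{\sigma}'(x)=\frac12(\sigma'(x+1)-\sigma'(x-1))$. The key fact is that $\sigma'$ is even (differentiate $\sigma(-y)=1-\sigma(y)$) and nonincreasing on $[0,\infty)$ by concavity in $(\Sigma2)$. Thus $\sigma'(y)$ is a nonincreasing function of $|y|$. For $x\geq0$ we have $|x+1|=x+1\geq|x-1|$, so $\sigma'(x+1)\leq\sigma'(x-1)$ and $\phi_{\sigma}'(x)\leq0$. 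The conclusion $\phi_{\sigma}(0)\geq\phi_{\sigma}(x)$ for all $x$ follows immediately. This is the only step needing care, and the main obstacle is exactly handling $x\in[0,1)$, where $x-1<0$. The evenness of $\sigma'$ takes care of it.

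\emph{Item (4).} For $x\to-\infty$, $\phi_{\sigma}(x)\leq\frac12\sigma(x+1)=O(|x+1|^{-(1+\alpha)})$ by $(\Sigma3)$, and $|x+1|^{-(1+\alpha)}\leq 2^{1+\alpha}|x|^{-(1+\alpha)}$ once $|x|\geq2$. This gives constants $A,B$. Evenness then transfers the bound to $x\to+\infty$.

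\emph{Item (6).} Fix $x$ and form the partial sums over $|k|\leq N$. The sum telescopes in steps of $2$:
\[
\sum_{k=-N}^{N}\bigl(\sigma(x-k+1)-\sigma(x-k-1)\bigr)=\sigma(x+N+1)+\sigma(x+N)-\sigma(x-N)-\sigma(x-N-1).
\]
As $N\to\infty$ this tends to $1+1-0-0=2$. Multiplying by $\frac12$ gives the value $1$. The series converges, being a series of nonnegative terms by (1), so the limit of these symmetric partial sums is its sum.
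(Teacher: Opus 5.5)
Your proof is correct and complete. The paper does not prove this lemma itself but defers to Costarelli--Spigler \cite{cost1}, and your argument is essentially the standard one from there: evenness of $\sigma'$ (from $(\Sigma1)$) handles $0\le x<1$ in item (3); the bound $\phi_{\sigma}(x)\le\frac12\sigma(x+1)$ together with evenness gives item (4); and the step-two telescoping of symmetric partial sums gives item (6).
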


From $(3)$ and $(6)$ of Lemma \ref{lemma1}, we can conclude that $\phi
_{\sigma}\in L^{1}\left(  \mathbb{R}\right)  $ (see also\ Remark 1 in
\cite{cost25}). In addition, by the definition of $\sigma$ and $\phi_{\sigma}%
$, we have $\phi_{\sigma}\left(  x\right)  \leq1/2$ for all $x\in\mathbb{R}$.

\begin{remark}
\label{rem0} We reminde that, as in \cite{cost1, cost2}, the condition
$\left(  \Sigma2\right)  $ was only used to prove property $(3)$ of Lemma
\ref{lemma1}. Therefore, the reader can conclude that our theory can also
apply to the non-smooth sigmoid functions, which satisfy the property $(3)$ in
Lemma \ref{lemma1} in place of the condition $\left(  \Sigma2\right)  $.
\end{remark}

The next lemma is necessary for the well-definiteness of Durrmeyer-type
max-min NN operators given in (\ref{a}).

\begin{lemma}
(see \cite{cost2})\label{lemma2}

\begin{enumerate}
\item For a given $x\in\mathbb{R}$,
\[
\bigvee\limits_{k\in\mathbb{Z}}\phi_{\sigma}\left(  nx-k\right)  \geq
\phi_{\sigma}\left(  2\right)  >0
\]
holds for all $n\in\mathbb{N},$

\item Let the interval $\left[  a,b\right]  $ be given. Then for all
$x\in\left[  a,b\right]  $
\[
\bigvee\limits_{k=na}^{nb-1}\phi_{\sigma}\left(  nx-k\right)  \geq\phi
_{\sigma}\left(  2\right)  >0
\]
holds for sufficiently large $n\in\mathbb{N}$.
\end{enumerate}
\end{lemma}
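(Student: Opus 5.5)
The idea is to find, for each $x$, one integer $k$ with $|nx-k|\le 2$, and then use monotonicity of $\phi_\sigma$ from Lemma \ref{lemma1}(3) together with evenness from Lemma \ref{lemma1}(5).

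\textbf{Part (1).} Fix $x\in\mathbb{R}$ and $n\in\mathbb{N}$, and take $k:=\lfloor nx\rfloor\in\mathbb{Z}$. Then $0\le nx-k<1\le 2$. Since $\phi_\sigma$ is nonincreasing on $[0,\infty)$, we get $\phi_\sigma(nx-k)\ge\phi_\sigma(2)$. Hence
\[
\bigvee_{k\in\mathbb{Z}}\phi_\sigma(nx-k)\ge\phi_\sigma(nx-\lfloor nx\rfloor)\ge\phi_\sigma(2).
\]
Positivity $\phi_\sigma(2)>0$ is item (1) of Lemma \ref{lemma1}. Behind it is the assumption $\sigma(3)>\sigma(1)$, since $\phi_\sigma(2)=\tfrac12(\sigma(3)-\sigma(1))$.

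\textbf{Part (2).} Here the index runs over $na\le k\le nb-1$, read as integers from $\lceil na\rceil$ to $\lfloor nb\rfloor-1$, and this range must contain some $k$ with $|nx-k|\le 2$. Take $n$ large enough that $n(b-a)\ge 2$; this guarantees the index set is nonempty. Then split into three cases.

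\emph{Interior case.} If $\lceil na\rceil\le\lfloor nx\rfloor\le\lfloor nb\rfloor-1$, choose $k=\lfloor nx\rfloor$. As in part (1), $0\le nx-k<1$.

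\emph{Left endpoint.} If $\lfloor nx\rfloor<\lceil na\rceil$, then $nx<\lceil na\rceil$ and $nx\ge na>\lceil na\rceil-1$. Choose $k=\lceil na\rceil$, which lies in the range because $\lceil na\rceil\le\lfloor nb\rfloor-1$ once $n(b-a)\ge 2$. Then $|nx-k|<1$.

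\emph{Right endpoint.} If $\lfloor nx\rfloor\ge\lfloor nb\rfloor$, which happens only when $x$ is near $b$, choose $k=\lfloor nb\rfloor-1$. Since $nx\le nb<\lfloor nb\rfloor+1$, we have $0\le nx-k<2$.

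In every case $|nx-k|\le 2$. By evenness and monotonicity, $\phi_\sigma(nx-k)=\phi_\sigma(|nx-k|)\ge\phi_\sigma(2)$, which gives the claim.

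The main obstacle is this endpoint bookkeeping in part (2). Near $b$ the available index can be almost $2$ away from $nx$. This is exactly why the threshold is $\phi_\sigma(2)$ rather than $\phi_\sigma(1)$, and why "sufficiently large $n$" is needed: it ensures the index interval has at least two points, or at least that $\lceil na\rceil\le\lfloor nb\rfloor-1$ holds.
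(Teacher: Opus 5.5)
Your proof is correct. It is essentially the standard argument behind the result the paper cites from \cite{cost2}; the paper gives no proof of its own. The argument is to locate an admissible index $k$ with $|nx-k|\le 2$ and then apply evenness and monotonicity of $\phi_\sigma$ from Lemma \ref{lemma1}.

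Your endpoint case analysis for general real $a,b$ is sound. In the paper's setting with integer $a<b$ it simplifies further: for every $n\ge 1$ one finds $k\in\{na,\dots,nb-1\}$ with $|nx-k|\le 1$, using $k=\lfloor nx\rfloor$ when $x<b$ and $k=nb-1$ when $x=b$.
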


We note that if the index set of maximum operation is infinite, then
\textquotedblleft$\vee$\textquotedblright\ represents the supremum operation.

\begin{lemma}
(see \cite{cost2})\label{lemma0} For each $\delta>0,$ we have%
\[
\bigvee\limits_{\overset{k\in\mathbb{Z}}{\left\vert x-k\right\vert >n\delta}%
}\phi_{\sigma}\left(  x-k\right)  =O\big(  n^{-\left(  1+\alpha\right)
}\big)  \text{ as }n\rightarrow\infty
\]
\newline uniformly in $x\in\mathbb{R}$.
\end{lemma}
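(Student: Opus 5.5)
The plan is to use the decay estimate in property $(4)$ of Lemma \ref{lemma1}. Since $\phi_{\sigma}$ is even and monotone on each half-line by properties $(3)$ and $(5)$, a supremum over far-away integers is controlled by the value of $\phi_{\sigma}$ at the nearest admissible point. No summation over $k$ is needed, because we deal with a maximum rather than a sum; this is what allows the exponent $1+\alpha$ to survive unchanged.

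Fix $\delta>0$ and $x\in\mathbb{R}$. Every $k\in\mathbb{Z}$ with $|x-k|>n\delta$ satisfies $|x-k|>n\delta$, so by property $(3)$ of Lemma \ref{lemma1} and evenness we have
\[
\phi_{\sigma}(x-k)=\phi_{\sigma}(|x-k|)\leq\phi_{\sigma}(n\delta).
\]
Here we use that $\phi_{\sigma}$ is nonincreasing on $[0,\infty)$. Taking the supremum over all such $k$ gives
\[
\bigvee_{\substack{k\in\mathbb{Z}\\ |x-k|>n\delta}}\phi_{\sigma}(x-k)\leq\phi_{\sigma}(n\delta),
\]
and this bound is independent of $x$. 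Alternatively, one can apply property $(4)$ directly to each term, bounding it by $A|x-k|^{-(1+\alpha)}<A(n\delta)^{-(1+\alpha)}$, provided $n\delta>B$.

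Next, choose $n$ so large that $n\delta>B$, where $A,B$ are the constants from property $(4)$. Then
\[
\phi_{\sigma}(n\delta)\leq A(n\delta)^{-(1+\alpha)}=A\delta^{-(1+\alpha)}\,n^{-(1+\alpha)}.
\]
This is exactly $O\big(n^{-(1+\alpha)}\big)$, with constant $A\delta^{-(1+\alpha)}$ and threshold $n>B/\delta$, neither depending on $x$. The estimate therefore holds uniformly in $x\in\mathbb{R}$.

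There is no genuine obstacle. The only point to watch is making sure the monotonicity argument is applied to $|x-k|$ rather than $x-k$, which is where evenness (property $(5)$) is used. One should also note that the supremum may be over an empty set for small $n$; this is irrelevant for an asymptotic statement, or one can adopt the convention that the empty supremum equals $0$.
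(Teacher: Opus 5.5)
Your proof is correct and is essentially the standard argument for this lemma. The paper does not prove it but imports it from \cite{cost2}; once $n\delta>B$, the decay bound (4) of Lemma \ref{lemma1} controls every term with $|x-k|>n\delta$ by $A\delta^{-(1+\alpha)}n^{-(1+\alpha)}$ independently of $x$, and your monotonicity reduction to $\phi_{\sigma}(n\delta)$ is only a harmless repackaging of this. One slip is cosmetic: since $k$ ranges over all of $\mathbb{Z}$, the index set $\{k\in\mathbb{Z}:|x-k|>n\delta\}$ is never empty (it is always infinite), so your closing remark about empty suprema is unnecessary.
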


Now, let us look at some features of maximum and minimum operations that we
will use frequently in the following section.

\begin{lemma}
(see \cite{bede5})\label{lemma3} If $\bigvee\limits_{i\in\mathbb{Z}}%
x_{i}<\infty$ or $\bigvee\limits_{i\in\mathbb{Z}}y_{i}<\infty$, then
\[
\biggl| \bigvee\limits_{i\in\mathbb{Z}}x_{i}-\bigvee\limits_{i\in\mathbb{Z}%
}y_{i} \biggr| \leq\bigvee\limits_{i\in\mathbb{Z}} |x_{i}-y_{i}|.
\]

\end{lemma}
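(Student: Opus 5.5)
We reduce the statement to a scalar inequality between two suprema and then argue by symmetry. Write $X:=\bigvee_{i\in\mathbb{Z}}x_{i}$ and $Y:=\bigvee_{i\in\mathbb{Z}}y_{i}$, and $S:=\bigvee_{i\in\mathbb{Z}}|x_{i}-y_{i}|$. If $S=\infty$ the inequality holds trivially, so we assume $S<\infty$.

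The first step is to show that both $X$ and $Y$ are finite, so that $X-Y$ is meaningful. By hypothesis at least one of them, say $Y$, is finite. Then for every $i$ we have
\[
x_{i}\leq y_{i}+|x_{i}-y_{i}|\leq Y+S,
\]
so $X\leq Y+S<\infty$. The case where $X$ is the finite one is symmetric. Implicitly the families are nonempty with real entries, so neither supremum equals $-\infty$.

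The main step is to prove $X-Y\leq S$. For each $i$,
\[
x_{i}=y_{i}+(x_{i}-y_{i})\leq y_{i}+|x_{i}-y_{i}|\leq Y+S.
\]
Taking the supremum over $i$ gives $X\leq Y+S$, that is, $X-Y\leq S$. Exchanging the roles of $(x_{i})$ and $(y_{i})$ gives $Y-X\leq S$. Together these yield $|X-Y|\leq S$, which is the claim.

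The only delicate point is the well-definedness issue handled in the first step: the hypothesis requires only one of the suprema to be finite, and the bound $X\leq Y+S$ transfers finiteness to the other. No earlier lemma from the paper is needed. The argument uses only the definition of the supremum as a least upper bound, and it works verbatim for finite index sets, where $\bigvee$ is an ordinary maximum.
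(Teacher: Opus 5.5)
Your proof is correct. The paper gives no proof of its own and only cites Bede--Coroianu--Gal; your argument (the termwise bound $x_i\le y_i+|x_i-y_i|\le Y+S$, then taking the supremum and symmetrizing) is the standard proof of this inequality, and your preliminary step transferring finiteness from one supremum to the other is the right way to make $X-Y$ well defined.
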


\begin{lemma}
(see \cite{bede1})\label{lemma4} $\left\vert a\wedge b-a\wedge c\right\vert
\leq a\wedge\left\vert b-c\right\vert $ for all $a,b,c\in\left[  0,1\right]  $
\end{lemma}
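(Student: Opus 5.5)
We must prove Lemma \ref{lemma4}: for all $a,b,c\in[0,1]$, $|a\wedge b-a\wedge c|\leq a\wedge|b-c|$. The plan is a direct case analysis, after two reductions. First, the expression is symmetric in $b$ and $c$. Hence we may assume without loss of generality that $b\geq c$. Then $|b-c|=b-c$, and since $t\mapsto a\wedge t$ is nondecreasing, $a\wedge b\geq a\wedge c$. The claim therefore becomes
\[
a\wedge b-a\wedge c\leq a\wedge (b-c).
\]
Since the right-hand side is a minimum, it suffices to prove two separate bounds: that the left-hand side is at most $a$, and that it is at most $b-c$.

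The bound by $a$ is immediate. We have $a\wedge b\leq a$ and $a\wedge c\geq 0$, since $a,c\in[0,1]$ are nonnegative. Hence $a\wedge b-a\wedge c\leq a$. This is the only place where the hypothesis that the entries lie in $[0,1]$ is used; only nonnegativity of $a$ and $c$ matters.

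The bound by $b-c$ is the $1$-Lipschitz property of $t\mapsto a\wedge t$. We check it in three cases according to the position of $a$ relative to $c\leq b$:
\begin{itemize}
\item if $a\leq c$, the difference is $a-a=0\leq b-c$;
\item if $c<a\leq b$, the difference is $a-c\leq b-c$;
\item if $a>b$, the difference is $b-c$.
\end{itemize}
Combining the two bounds gives the claim. There is no genuine obstacle here. The only point requiring care is the first reduction, namely noticing that symmetry and monotonicity remove the absolute value on the left. The rest is elementary verification.
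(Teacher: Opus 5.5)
Your proof is correct and complete. Three checks are worth recording:
\begin{itemize}
\item The reduction to $b\geq c$ is legitimate, because both sides are symmetric in $b,c$ and $t\mapsto a\wedge t$ is nondecreasing.
\item The bound by $a$ correctly uses $a\wedge c\geq 0$.
\item Your three cases $a\leq c$, $c<a\leq b$, $a>b$ are exhaustive and each is verified correctly.
\end{itemize}

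The paper gives no proof of its own here; it imports the inequality from \cite{bede1}. Your case analysis therefore serves as a self-contained justification rather than an alternative to an argument in the text.

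Your remark that the upper bound $1$ is never used is accurate. The inequality holds for all $a,b,c\geq 0$, since what is needed is nonnegativity of $a$ and of $\min\{b,c\}$. That covers the application in Lemma~\ref{lemma5}(d), where the entries lie in $[0,1]$.
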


\begin{lemma}
$\forall a,b,c\geq0,$ the following inequalities hold
\[
(a+b)\wedge c\leq a\wedge c+b\wedge c
\]

and%
\[
(a+b)\vee c\leq a\vee c+b\vee c.
\]

\end{lemma}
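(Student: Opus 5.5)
Both inequalities are elementary, and I would prove each by checking cases on which term attains the minimum (or maximum). Nonnegativity of $a$, $b$, $c$ is the only hypothesis I expect to need.

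For the first inequality, $(a+b)\wedge c\leq a\wedge c+b\wedge c$, I would split according to how $a$ and $b$ compare with $c$.
\begin{enumerate}
\item[(i)] If $a\leq c$ and $b\leq c$, the right-hand side is $a+b$. The left-hand side is $\min\{a+b,c\}\leq a+b$.
\item[(ii)] If $a>c$ (the case $b>c$ is symmetric), then $a\wedge c=c$. Since $b\wedge c\geq0$, the right-hand side is at least $c$. The left-hand side is $\min\{a+b,c\}\leq c$.
\end{enumerate}
These cases are exhaustive, so the first inequality holds.

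For the second inequality, $(a+b)\vee c\leq a\vee c+b\vee c$, I would split according to which term attains the maximum on the left.
\begin{enumerate}
\item[(i)] If the maximum is $a+b$, use $a\leq a\vee c$ and $b\leq b\vee c$, so $a+b\leq a\vee c+b\vee c$.
\item[(ii)] If the maximum is $c$, use $c\leq a\vee c$ and $0\leq b\vee c$, which follows from $b\vee c\geq b\geq0$. Hence $c\leq a\vee c+b\vee c$.
\end{enumerate}

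The only point needing care is where nonnegativity enters. In the first inequality it is used to drop the term $b\wedge c\geq0$ in case (ii). In the second inequality it is used to drop the term $b\vee c\geq0$ in case (ii). Without nonnegativity both inequalities can fail; for instance $a=2$, $b=-1$, $c=0$ breaks the first, since $(a+b)\wedge c=0$ while $a\wedge c+b\wedge c=-1$.
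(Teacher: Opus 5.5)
Your case analysis is correct and complete, and your counterexample $a=2$, $b=-1$, $c=0$ correctly shows that nonnegativity cannot be dropped. The paper states this lemma without proof and treats it as elementary, so your argument supplies the routine verification the authors leave implicit.
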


\begin{lemma}
(see \cite{bede1})\label{lemma6} For any $x_{i}\geq0$ and $k\geq0,$
\[
\bigl(  \bigvee\limits_{i\in\mathbb{Z}}x_{i}\bigl)  ^{k}=\bigvee
\limits_{i\in\mathbb{Z}}x_{i}^{k}%
\]
and
\[
\bigl(  \bigwedge\limits_{i\in\mathbb{Z}}x_{i}\bigl)  ^{k}=\bigwedge
\limits_{i\in\mathbb{Z}}x_{i}^{k}.
\]

\end{lemma}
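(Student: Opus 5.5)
The plan is to derive both identities from one fact: on $[0,\infty)$ the map $g(t)=t^{k}$ is nondecreasing and continuous, and for $k>0$ it is strictly increasing. I use the convention $\infty^{k}=\infty$ for $k>0$, since $\bigvee_{i\in\mathbb{Z}}x_{i}$ may be infinite when the index set is infinite. The case $k=0$ comes first. Every $x_{i}^{0}=1$, with the convention $0^{0}=1$, and $\bigl(\bigvee_{i}x_{i}\bigr)^{0}=1$ as well. So both sides of each identity equal $1$, and from now on I assume $k>0$.

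For the supremum, set $S:=\bigvee_{i\in\mathbb{Z}}x_{i}\in[0,\infty]$.
\begin{enumerate}
\item Upper bound: for each $i$ we have $0\le x_{i}\le S$, so monotonicity of $g$ gives $x_{i}^{k}\le S^{k}$. Taking the supremum over $i$ yields $\bigvee_{i}x_{i}^{k}\le S^{k}$.
\item Reverse inequality: choose indices $i_{j}$ with $x_{i_{j}}\to S$ as $j\to\infty$.
\begin{itemize}
\item If $S<\infty$, continuity of $g$ at $S$ gives $x_{i_{j}}^{k}\to S^{k}$.
\item If $S=\infty$, then $x_{i_{j}}^{k}\to\infty$ because $k>0$.
\end{itemize}
In both cases $\bigvee_{i}x_{i}^{k}\ge\lim_{j}x_{i_{j}}^{k}=S^{k}$.
\end{enumerate}
For a finite index set the supremum is attained, and the reverse inequality is immediate.

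The infimum is handled the same way with $I:=\bigwedge_{i\in\mathbb{Z}}x_{i}\in[0,\infty)$, which is finite because the $x_{i}$ are nonnegative reals.
\begin{enumerate}
\item Lower bound: $x_{i}\ge I$ gives $x_{i}^{k}\ge I^{k}$, hence $\bigwedge_{i}x_{i}^{k}\ge I^{k}$.
\item Reverse inequality: take $x_{i_{j}}\to I$. Continuity of $g$ at $I\ge 0$, including right-continuity at $0$, gives $x_{i_{j}}^{k}\to I^{k}$, so $\bigwedge_{i}x_{i}^{k}\le I^{k}$.
\end{enumerate}

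No step is a real obstacle. The only care needed is in the degenerate cases: the exponent $k=0$, and an infinite supremum. Both are dealt with by the conventions fixed above.
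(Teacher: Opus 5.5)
Your argument is correct. The paper gives no proof of this lemma; it only cites Bede et al.\ \cite{bede1}. So your proposal supplies a self-contained argument rather than following one in the paper. Monotonicity of $t\mapsto t^{k}$ gives one inequality, and continuity, applied along a sequence approaching the supremum or infimum, gives the reverse.

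A slightly shorter route for $k>0$ is available. The map $t\mapsto t^{k}$ is an order-preserving bijection of $[0,\infty]$ onto itself, and its inverse $t\mapsto t^{1/k}$ is also order-preserving. An order isomorphism commutes with arbitrary suprema and infima, so no sequences or limits are needed.

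It is also worth noting how the paper actually uses the lemma. It is invoked only in the proof of Theorem \ref{theorem4}, with $k=p\ge 1$ and over the finite index set $\{na,\dots,nb-1\}$. There, as you remark, the maximum and minimum are attained, so monotonicity alone suffices.

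The only loose end is a convention. In your $k=0$ case, if $\bigvee_{i}x_{i}=\infty$, you are implicitly using $\infty^{0}=1$ in addition to $0^{0}=1$. You should state that convention explicitly, or restrict the $k=0$ case to bounded families.
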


\section{Durrmeyer-type Max-Min Neural Network Operators}

For simplicity, let us assume that $a$ and $b$ are integers with $a < b$. Let
$f: [a, b] \to[0, 1]$ be a bounded and $L_{1}$-integrable function. The
max-min modification of the Durrmeyer-type NN operators given in (\ref{3}) is
introduced as follows
\begin{equation}
D_{n}^{\left(  m\right)  }\left(  f;x\right)  :=\bigvee\limits_{k=na}%
^{nb-1}\frac{\int\limits_{a}^{b}\chi(nt-k)f(t)dt}{\int\limits_{a}^{b}%
\chi(nt-k)dt}\wedge\dfrac{\phi_{\sigma}\left(  nx-k\right)  }{\bigvee
\limits_{d=na}^{nb-1}\phi_{\sigma}\left(  nx-d\right)  }\text{ \ \ (}%
x\in\left[  a,b\right]  \text{)}. \label{a}%
\end{equation}
It is worth noting that although the function $f$ appears to be restricted to the $[0,1]$ interval, this limitation can be addressed via a simple trick. Consequently, the subsequent approximations can be extended to any bounded function $f$ (see \cite{is}). \\ \medskip

Before establishing that the operator is well-defined, we present two
auxiliary inequalities that will be used frequently.
\begin{align}
\int\limits_{a}^{b}\chi(nt-k)dt  &  =\frac{1}{n}\int\limits_{na-k}^{nb-k}%
\chi(u)du\geq\frac{1}{n}\int\limits_{0}^{1}\chi(u)du=\frac{\mathcal{A}}%
{n}>0\label{*}\\
\int\limits_{a}^{b}\chi(nt-k)dt  &  =\frac{1}{n}\int\limits_{na-k}^{nb-k}%
\chi(u)du\leq\frac{1}{n}\int\limits_{\mathbb{R}}\chi(u)du=\frac{||\chi||_{1}%
}{n} \label{**}%
\end{align}
for all $k\in\{na,\ldots,nb-1\}$, where $||\cdot||_{1}$ denotes the $L_{1}$-norm.

From Lemma \ref{lemma2} and (\ref{*})\ the denominator part of the operator is
strictly positive. On the other hand, since $0\leq f\leq1$ then
\[
|D_{n}^{\left(  m\right)  }\left(  f;x\right)  |\leq1
\]
for all $x\in[a,b]$

\begin{lemma}
\label{lemma5} Let $f,g:\left[  a,b\right]  \rightarrow\left[  0,1\right]  $
be two measurable functions.

\begin{enumerate}
\item[(a)] If $\sigma$ is continuous function on $\mathbb{R}$, then
$D_{n}^{\left(  m\right)  }\left(  f\right)  $ is continuous on $\left[
a,b\right]  $,

\item[(b)] If $f\left(  x\right)  \leq g\left(  x\right)  $ for all
$x\in\left[  a,b\right]  ,$ then $D_{n}^{\left(  m\right)  }\left(
f;x\right)  \leq D_{n}^{\left(  m\right)  }\left(  g;x\right)  $ for all
$x\in\left[  a,b\right]  $,

\item[(c)] $D_{n}^{\left(  m\right)  }$ is sublinear, that is, $D_{n}^{\left(
m\right)  }\left(  f+g;x\right)  \leq D_{n}^{\left(  m\right)  }\left(
f;x\right)  +D_{n}^{\left(  m\right)  }\left(  g;x\right)  $ for all
$x\in\left[  a,b\right]  $,

\item[(d)] $\left\vert D_{n}^{\left(  m\right)  }\left(  f;x\right)
-D_{n}^{\left(  m\right)  }\left(  g;x\right)  \right\vert \leq D_{n}^{\left(
m\right)  }\left(  \left\vert f-g\right\vert ;x\right)  $ for all $x\in\left[
a,b\right]  $.
\end{enumerate}
\end{lemma}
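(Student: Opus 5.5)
Write $w_{k}(x):=\phi_{\sigma}(nx-k)/\bigvee_{d=na}^{nb-1}\phi_{\sigma}(nx-d)\in[0,1]$ and
\[
c_{k}(f):=\frac{\int_{a}^{b}\chi(nt-k)f(t)\,dt}{\int_{a}^{b}\chi(nt-k)\,dt},
\]
so that $D_{n}^{(m)}(f;x)=\bigvee_{k=na}^{nb-1}c_{k}(f)\wedge w_{k}(x)$. Each $c_{k}$ is well defined by (\ref{*}). It is a weighted average with respect to the nonnegative weight $\chi(nt-k)$, so it is \emph{linear} and \emph{monotone} in $f$, and $c_{k}(f)\in[0,1]$ whenever $0\le f\le1$. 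All four items will be reduced to these two properties of $c_{k}$ combined with the lattice lemmas. The index set is finite throughout, so every max is a genuine maximum.

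For (a), assume $\sigma$ is continuous. Then $\phi_{\sigma}$ is continuous, so each $x\mapsto\phi_{\sigma}(nx-k)$ is continuous. The finite max in the denominator is continuous and, by Lemma \ref{lemma2}, bounded below by $\phi_{\sigma}(2)>0$. Hence each $w_{k}$ is continuous. Since $c_{k}(f)$ does not depend on $x$, and minima and finite maxima of continuous functions are continuous, $D_{n}^{(m)}(f)$ is continuous on $[a,b]$.

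For (b), $f\le g$ together with $\chi\ge0$ gives $c_{k}(f)\le c_{k}(g)$ for every $k$. Both $\wedge w_{k}(x)$ and the finite $\bigvee$ are monotone, so the inequality passes through to $D_{n}^{(m)}$. For (c), linearity gives $c_{k}(f+g)=c_{k}(f)+c_{k}(g)$. The lemma $(a+b)\wedge c\le a\wedge c+b\wedge c$ then yields
\[
c_{k}(f+g)\wedge w_{k}\le c_{k}(f)\wedge w_{k}+c_{k}(g)\wedge w_{k}.
\]
Take the max over $k$ and use $\bigvee_{k}(u_{k}+v_{k})\le\bigvee_{k}u_{k}+\bigvee_{k}v_{k}$ for nonnegative terms. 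The formula (\ref{a}) makes sense for $f+g$ even if $f+g$ exceeds $1$, so this step is unaffected.

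For (d), Lemma \ref{lemma3} (finite index set) gives
\[
|D_{n}^{(m)}(f;x)-D_{n}^{(m)}(g;x)|\le\bigvee_{k}|c_{k}(f)\wedge w_{k}-c_{k}(g)\wedge w_{k}|.
\]
All of $c_{k}(f)$, $c_{k}(g)$ and $w_{k}$ lie in $[0,1]$, so Lemma \ref{lemma4} (with $a=w_{k}$) bounds each term by $w_{k}\wedge|c_{k}(f)-c_{k}(g)|$. By linearity and positivity of the weighted average, $|c_{k}(f)-c_{k}(g)|=|c_{k}(f-g)|\le c_{k}(|f-g|)$. Monotonicity of $\wedge$ and $\vee$ then gives the bound $\bigvee_{k}c_{k}(|f-g|)\wedge w_{k}(x)=D_{n}^{(m)}(|f-g|;x)$, and $|f-g|$ again takes values in $[0,1]$.

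The main point needing care is (d). Applying Lemma \ref{lemma4} requires all three arguments to lie in $[0,1]$, and this holds because the normalization in $c_{k}$ makes it an average; this is exactly why the Durrmeyer coefficient is divided by $\int_{a}^{b}\chi(nt-k)\,dt$. The other items are routine.
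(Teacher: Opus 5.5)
Your proof is correct and takes essentially the same approach as the paper. The paper only says the lemma follows from the definition of the operator and the preceding lemmas, and you supply exactly those details via Lemmas \ref{lemma2}, \ref{lemma3}, \ref{lemma4} and the inequality $(a+b)\wedge c\le a\wedge c+b\wedge c$. As a minor aside, Lemma \ref{lemma4} holds for all $a,b,c\ge 0$, so the normalization in $c_{k}$ is not what makes (d) work; its real role is that the operator reproduces constants, which is used in the proof of Theorem \ref{thm1}.
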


The proof can be easily obtained from the definition of the operator and the
lemmas above.

Now, for a fixed $\delta>0,$ $x\in\left[  a,b\right]  $ and $n\in\mathbb{N}$,
we define $B_{\delta,n}\left(  x\right)  $ by
\[
B_{\delta,n}\left(  x\right)  :=\big\{  k=na,na+1,\ldots,nb-1:\big|
x-\frac{k}{n}\big| >\delta\big\}  \text{.}%
\]
Our approximation theorems now read as follows.

\begin{theorem}
\label{thm1}

(i) If $f$ is continuous at any point $x_{0}\in[a,b]$, then%

\[
\lim_{n\rightarrow\infty}D_{n}^{\left(  m\right)  }\left(  f;x_{0}\right)
=f(x_{0}).
\]

(ii) If $f$ $\in C([a,b],[0,1])$, then%

\[
\lim_{n\rightarrow\infty}||D_{n}^{\left(  m\right)  }(f)-f||_{\infty}=0
\]

\end{theorem}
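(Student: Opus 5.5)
The plan is to compare $D_{n}^{(m)}(f;x_0)$ with $D_{n}^{(m)}$ applied to the constant function $f(x_0)$, and then split the indices $k$ into those with $k/n$ near $x_0$ and those far from it. For brevity, write
\[
I_{k,n}(f):=\frac{\int_{a}^{b}\chi(nt-k)f(t)\,dt}{\int_{a}^{b}\chi(nt-k)\,dt},\qquad \Phi_{k,n}(x):=\frac{\phi_{\sigma}(nx-k)}{\bigvee_{d=na}^{nb-1}\phi_{\sigma}(nx-d)} ,
\]
both of which lie in $[0,1]$.

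\textbf{Step 1: constants are reproduced.} For a constant $c\in[0,1]$ we have $I_{k,n}(c)=c$. Moreover $\bigvee_k \Phi_{k,n}(x)=1$, since the maximizing index gives the value $1$. Hence $D_{n}^{(m)}(c;x)=c\wedge 1=c$.

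\textbf{Step 2: reduce to a single supremum.} Apply Lemma \ref{lemma3} and then Lemma \ref{lemma4}, whose hypotheses hold because all quantities lie in $[0,1]$. This gives
\[
\bigl|D_{n}^{(m)}(f;x_0)-f(x_0)\bigr|\le \bigvee_{k=na}^{nb-1}\Phi_{k,n}(x_0)\wedge \bigl|I_{k,n}(f)-f(x_0)\bigr| .
\]
Given $\varepsilon>0$, choose $\delta>0$ from continuity at $x_0$, so that $|f(t)-f(x_0)|<\varepsilon$ whenever $|t-x_0|<\delta$. Then split the index set into $B_{\delta/2,n}(x_0)$ and its complement.

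\textbf{Step 3: indices far from $x_0$.} For $k\in B_{\delta/2,n}(x_0)$, bound the minimum by $\Phi_{k,n}(x_0)$. By Lemma \ref{lemma2}(2), the denominator of $\Phi_{k,n}(x_0)$ is at least $\phi_\sigma(2)$ for large $n$. Since $|nx_0-k|>n\delta/2$, Lemma \ref{lemma0} bounds the numerator by $O(n^{-(1+\alpha)})$, uniformly in $x_0$.

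\textbf{Step 4: indices near $x_0$.} For $|k/n-x_0|\le\delta/2$, bound the minimum by $|I_{k,n}(f)-f(x_0)|$. Using (\ref{*}),
\[
|I_{k,n}(f)-f(x_0)|\le \frac{n}{\mathcal A}\int_a^b \chi(nt-k)\,|f(t)-f(x_0)|\,dt .
\]
Split this integral at $|t-x_0|<\delta$.
\begin{itemize}
\item On the near part the integrand is at most $\varepsilon\chi(nt-k)$, so the contribution is at most $\varepsilon\|\chi\|_1/\mathcal A$ by (\ref{**}).
\item On the far part we use $|f(t)-f(x_0)|\le 1$. Also $|t-k/n|\ge\delta/2$ there, so after the substitution $u=nt-k$ the contribution is at most $\frac{1}{\mathcal A}\int_{|u|\ge n\delta/2}\chi(u)\,du\le \frac{2\tilde M_1(\chi)}{\mathcal A\,\delta\, n}$.
\end{itemize}

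\textbf{Conclusion.} Combining Steps 3 and 4,
\[
|D_{n}^{(m)}(f;x_0)-f(x_0)|\le \frac{\varepsilon\|\chi\|_1}{\mathcal A}+\frac{2\tilde M_1(\chi)}{\mathcal A\delta n}+\frac{C}{\phi_\sigma(2)}\,n^{-(1+\alpha)} ,
\]
and letting $n\to\infty$ and then $\varepsilon\to0$ gives (i). For (ii), uniform continuity makes $\delta$ independent of $x_0$. The constants above are uniform in $x\in[a,b]$: Lemma \ref{lemma0} is uniform, and the choice of $n$ in Lemma \ref{lemma2}(2) does not depend on $x$. Taking the supremum over $x$ therefore yields uniform convergence.

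\textbf{Main obstacle.} The delicate point is Step 4. Unlike the Kantorovich case, the kernel $\chi$ is not compactly supported, so $I_{k,n}(f)$ sees values of $f$ far from $k/n$. This tail must be controlled through the first moment $\tilde M_1(\chi)$ and the lower bound (\ref{*}) on the denominator.
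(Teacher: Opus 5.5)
Your proof is correct and follows essentially the same route as the paper: you compare with the reproduced constant $f(x_0)$ and split the indices along $B_{\delta/2,n}(x_0)$. You then handle the far indices with Lemma~\ref{lemma2} and Lemma~\ref{lemma0}, and the near ones with (\ref{*}), (\ref{**}) and the tail of $\chi$. The differences are cosmetic: you invoke Lemmas~\ref{lemma3} and~\ref{lemma4} directly instead of Lemma~\ref{lemma5}(d), and you split the inner integral at $|t-x_0|<\delta$ rather than $|t-k/n|<\delta/2$. You also bound $\int_{|u|\ge n\delta/2}\chi(u)\,du$ explicitly by $2\tilde M_1(\chi)/(n\delta)$, whereas the paper uses only $\chi\in L^1(\mathbb{R})$.
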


\noindent Here, $||\cdot||_{\infty}$ denotes the supremum norm.

\begin{proof}
Let $x_{0}\in\left[  a,b\right]  $ be a point, where $f$ is continuous. Then
by the triangle inequality we get,%
\[
|D_{n}^{\left(  m\right)  }(f;x_{0})-f(x_{0})|\leq|D_{n}^{\left(  m\right)
}(f,x_{0})-D_{n}^{\left(  m\right)  }(f(x_{0}),x_{0})|+|D_{n}^{\left(
m\right)  }(f(x_{0}),x_{0})-f(x_{0})|
\]
where
\begin{align*}
\left|  D_{n}^{(m)}(f(x_{0}),x_{0}) - f(x_{0}) \right|   &  = \bigg| \bigvee
_{k=na}^{nb-1} \bigg( f(x_{0}) \wedge\dfrac{\phi_{\sigma}(nx_{0}-k)}%
{\bigvee_{d=na}^{nb-1} \phi_{\sigma}(nx_{0}-d)} \bigg) - f(x_{0}) \bigg|\\
&  = \bigg| f(x_{0}) \wedge\bigg( \bigvee_{k=na}^{nb-1} \dfrac{\phi_{\sigma
}(nx_{0}-k)}{\bigvee_{d=na}^{nb-1} \phi_{\sigma}(nx_{0}-d)} \bigg) - f(x_{0})
\bigg|\\
&  =0.
\end{align*}
We also know from the Lemma \ref{lemma5} (d) that
\[
|D_{n}^{\left(  m\right)  }(f,x_{0})-D_{n}^{\left(  m\right)  }(f(x_{0}%
),x_{0})|\leq D_{n}^{\left(  m\right)  }(|f-f(x_{0})|,x_{0}).
\]
Now, since $f$ is continuous at the point $x_{0}$, for every $\varepsilon>0$
there exists a $\delta(x_{0},\varepsilon)>0$ such that $|f(t)-f(x_{0}%
)|<\varepsilon$ whenever $|t-x_{0}| < \delta$. So then,%
\begin{align*}
|D_{n}^{\left(  m\right)  }(f,x_{0})-f(x_{0})|  &  \leq\bigvee\limits_{k\notin
B_{\frac{\delta}{2},n}\left(  x_{0}\right)  }\frac{\int\limits_{a}^{b}
\chi(nt-k)|f(t)-f(x_{0})|dt}{\int\limits_{a}^{b}\chi(nt-k)dt}\wedge\dfrac
{\phi_{\sigma}\left(  nx_{0}-k\right)  }{\bigvee\limits_{d=na} ^{nb-1}%
\phi_{\sigma}\left(  nx_{0}-d\right)  }\\
&  \bigvee\bigvee\limits_{k\in B_{\frac{\delta}{2},n}\left(  x_{0}\right)
}\frac{\int\limits_{a}^{b} \chi(nt-k)|f(t)-f(x_{0})|dt}{\int\limits_{a}%
^{b}\chi(nt-k)dt}\wedge\dfrac{\phi_{\sigma}\left(  nx_{0}-k\right)  }%
{\bigvee\limits_{d=na} ^{nb-1}\phi_{\sigma}\left(  nx_{0}-d\right)  }\\
&  =:T_{1}\vee T_{2}.
\end{align*}
Let us examine $T_{2}$. Since $f\leq1$, from Lemma \ref{lemma2} and Lemma
\ref{lemma0}, there exists a number $K>0$ such that
\begin{align*}
T_{2}  &  \leq\bigvee\limits_{k\in B_{\frac{\delta}{2},n}\left(  x_{0}\right)
}\dfrac{\phi_{\sigma}\left(  nx_{0}-k\right)  }{\bigvee\limits_{d=na}%
^{nb-1}\phi_{\sigma}\left(  nx_{0}-d\right)  }\leq\frac{1}{\phi_{\sigma}%
(2)}\bigvee\limits_{|nx_{0} -k|>\frac{n\delta}{2}}\phi_{\sigma}\left(
nx_{0}-k\right)  \leq\frac{Kn^{-(1+\alpha)}}{\phi_{\sigma}(2)}<\varepsilon
\end{align*}
holds for sufficiently large $n\in\mathbb{N}^{+}.$

Now, let us examine the $T_{1\text{.}}$ Since $a\wedge b\leq a$, from
(\ref{*}) we get,
\begin{align*}
T_{1}  &  \leq\bigvee\limits_{k\notin B_{\frac{\delta}{2},n}\left(
x_{0}\right)  } \frac{\int\limits_{a}^{b}\chi(nt-k)|f(t)-f(x_{0})|dt}%
{\int\limits_{a}^{b} \chi(nt-k)dt}\\
&  \leq\frac{1}{\mathcal{A}}\bigvee\limits_{k\notin B_{\frac{\delta}{2}%
,n}\left(  x_{0}\right)  }n\int\limits_{a}^{b}\chi(nt-k)|f(t)-f(x_{0})|dt.
\end{align*}
Separating the boundary of the interval $[a,b]$, we may obtain
\begin{align*}
T_{1}  &  \leq\frac{1}{\mathcal{A}}\bigg( \bigvee\limits_{k\notin
B_{\frac{\delta}{2},n}\left(  x_{0}\right)  }n\int\limits_{|t-\frac{k}%
{n}|<\frac{\delta}{2}}\chi(nt-k)|f(t)-f(x_{0})|dt\\
&  +\bigvee\limits_{k\notin B_{\frac{\delta}{2},n}\left(  x_{0}\right)  }%
n\int\limits_{|t-\frac{k}{n}|\geq\frac{\delta}{2}}\chi(nt-k)|f(t)-f(x_{0}%
)|dt\bigg) =:\frac{1}{\mathcal{A}}(T_{1,1}+T_{1,2})
\end{align*}
In $T_{1,1}$, since $k\notin B_{\frac{\delta}{2},n}\left(  x_{0}\right)  $ and
$|t-\frac{k}{n}|<\frac{\delta}{2},\text{ we have } |t-x_{0}|\leq|t-\frac{k}%
{n}|+|\frac{k}{n}-x_{0}|<\frac{\delta}{2}+\frac{\delta}{2}=\delta$ and
therefore $|f(t)-f(x_{0})|<\varepsilon.$ Considering this situation together
with $u=nt-k$ substitution, we have%
\[
T_{1,1}<\varepsilon\bigvee\limits_{k\notin B_{\frac{\delta}{2},n}\left(
x_{0}\right)  }\int\limits_{\mathbb{R}}\chi(u)du=\varepsilon\int
\limits_{\mathbb{R}}\chi(u)du=\varepsilon||\chi||_{1}.
\]
In $T_{1,2}$, since $f\leq1$ and $\chi\in L^{1}(\mathbb{R)}$, we have
\[
T_{1,2}\leq\bigvee\limits_{k\notin B_{\frac{\delta}{2},n}\left(  x_{0}\right)
}n\int\limits_{|t-\frac{k}{n}|\geq\frac{\delta}{2}}\chi(nt-k)dt=\int
\limits_{|u|\geq\frac{n\delta}{2}}\chi(u)dt<\varepsilon
\]
holds for sufficiently large $n\in\mathbb{N}$. So in total,%
\[
T_{1}=\frac{1}{\mathcal{A}}(T_{1,1}+T_{1,2})\leq\frac{1}{\mathcal{A}%
}(\varepsilon||\chi||_{1}+\varepsilon)
\]
Then, we finally have
\[
|D_{n}^{\left(  m\right)  }(f,x_{0})-f(x_{0})|\leq T_{1}\vee T_{2}\leq
(\frac{\varepsilon||\chi||_{1}+\varepsilon}{\mathcal{A}})\vee\varepsilon
\]
which completes the first part of the proof since $\varepsilon$ is arbitrary.
For the second part of the theorem, if one assumes $f\in C\left(  \left[
a,b\right]  ,\left[  0,1\right]  \right)  $, then using similar argumentation
lines, and noting that $\delta=\delta\left(  \varepsilon\right)  \,$ the proof
can easily be completed.
\end{proof}

\begin{theorem}
\label{theorem2}Let $f\in C\left(  \left[  a,b\right]  ,\left[  0,1\right]
\right)  .$ Then%
\[
\lim_{n\rightarrow\infty}\left\Vert D_{n}^{\left(  m\right)  }\left(
f\right)  -f\right\Vert _{p}=0.
\]

\end{theorem}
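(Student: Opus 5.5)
The plan is to reduce the $L^{p}$ statement to the uniform convergence already established in Theorem \ref{thm1} (ii). Since $f\in C([a,b],[0,1])$, Theorem \ref{thm1} (ii) gives $\|D_{n}^{(m)}(f)-f\|_{\infty}\to0$. The only point to verify is that $D_{n}^{(m)}(f)-f$ is measurable on $[a,b]$, so that its $L^{p}$ norm makes sense.

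For measurability, I use that $D_{n}^{(m)}(f)$ is a finite maximum, over $k=na,\dots,nb-1$, of minima of a constant (the Durrmeyer average of $f$ with respect to $\chi(n\cdot-k)$) and the function
\[
x\mapsto\phi_{\sigma}(nx-k)\Big/\bigvee_{d=na}^{nb-1}\phi_{\sigma}(nx-d).
\]
Since $\sigma$ is nondecreasing, it is Borel measurable. Hence $\phi_{\sigma}$ is measurable. The denominator is bounded below by $\phi_{\sigma}(2)>0$ for large $n$ by Lemma \ref{lemma2}. Finite max/min operations preserve measurability. If $\sigma$ is continuous, Lemma \ref{lemma5} (a) gives continuity directly.

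The estimate itself is then immediate. For every $1\le p<\infty$,
\[
\|D_{n}^{(m)}(f)-f\|_{p}=\Big(\int_{a}^{b}|D_{n}^{(m)}(f;x)-f(x)|^{p}dx\Big)^{1/p}\le (b-a)^{1/p}\,\|D_{n}^{(m)}(f)-f\|_{\infty},
\]
and the right-hand side tends to $0$ as $n\to\infty$ by Theorem \ref{thm1} (ii). This proves the claim.

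There is no real obstacle here. The only delicate point is measurability in the case where $\sigma$ is not assumed continuous, which monotonicity of $\sigma$ handles as above. Alternatively, one could use dominated convergence: the integrand is bounded by $1$ (since $0\le f\le1$ and $|D_{n}^{(m)}(f)|\le1$) and tends to $0$ pointwise by Theorem \ref{thm1} (i). However, the uniform bound above is simpler and also yields the uniform rate in $p$.
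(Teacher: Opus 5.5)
Your proposal is correct and follows the paper's proof: both reduce to the uniform convergence of Theorem \ref{thm1} (ii) via the bound $\|D_{n}^{(m)}(f)-f\|_{p}\le(b-a)^{1/p}\|D_{n}^{(m)}(f)-f\|_{\infty}$. Your measurability check, which uses the monotonicity of $\sigma$ and the positivity of the denominator from Lemma \ref{lemma2}, is a valid detail that the paper leaves implicit.
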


\noindent Here, $\left\Vert \cdot\right\Vert _{p}$ denotes the $L^{p}$ norm on
the interval $\left[  a,b\right]  \ $\ for $1\leq p<\infty$.

\begin{proof}
It is known from the previous theorem that for any $\varepsilon>0$
\[
\left\Vert D_{n}^{\left(  m\right)  }\left(  f\right)  -f\right\Vert _{\infty
}<\varepsilon
\]
for sufficiently large $n\in\mathbb{N}$. Then we obtain
\begin{align*}
\left\Vert D_{n}^{\left(  m\right)  }\left(  f\right)  -f\right\Vert _{p}  &
\leq\bigg( \int\limits_{a}^{b}\left\Vert D_{n}^{\left(  m\right)  }\left(
f\right)  -f\right\Vert _{\infty}^{p}dx\bigg) ^{\frac{1}{p}}=\left\Vert
D_{n}^{\left(  m\right)  }\left(  f\right)  -f\right\Vert _{\infty
}(b-a)^{\frac{1}{p}}\\
&  <\varepsilon(b-a)^{\frac{1}{p}}%
\end{align*}
for sufficiently large $n\in\mathbb{N}$.
\end{proof}

\begin{theorem}
\label{theorem4}Let $f\in L^{p}\left(  \left[  a,b\right]  ,\left[
0,1\right]  \right)  $ for $1\leq p<\infty$. Then
\[
\lim_{n\rightarrow\infty}\left\Vert D_{n}^{\left(  m\right)  }\left(
f\right)  -f\right\Vert _{p}=0\text{.}%
\]

\end{theorem}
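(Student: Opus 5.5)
We use the standard density argument: approximate $f$ in $L^{p}$ by a continuous function, apply Theorem \ref{theorem2} to that function, and control the operator error in $L^{p}$ by the $L^{p}$ distance between the two functions. Fix $\varepsilon>0$. Since $C([a,b])$ is dense in $L^{p}([a,b])$ for $1\leq p<\infty$, we choose $g\in C([a,b])$ with $\Vert f-g\Vert_{p}<\varepsilon$. We may clip $g$ to $[0,1]$ by replacing it with $(g\vee0)\wedge1$. This clipped function is still continuous, and since $f$ takes values in $[0,1]$ the clipping can only decrease $|f-g|$ pointwise. So $g\in C([a,b],[0,1])$ and the operator is defined on it. We then split
\[
\Vert D_{n}^{(m)}(f)-f\Vert_{p}\leq\Vert D_{n}^{(m)}(f)-D_{n}^{(m)}(g)\Vert_{p}+\Vert D_{n}^{(m)}(g)-g\Vert_{p}+\Vert g-f\Vert_{p}.
\]
The third term is below $\varepsilon$, and the second is below $\varepsilon$ for large $n$ by Theorem \ref{theorem2}.

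The main step is a uniform (in $n$) $L^{p}$-stability bound of the form $\Vert D_{n}^{(m)}(h)\Vert_{p}\leq C\Vert h\Vert_{p}$ for measurable $h:[a,b]\to[0,1]$, with $C$ independent of $n$ and $h$. By Lemma \ref{lemma5}(d), $|D_{n}^{(m)}(f;x)-D_{n}^{(m)}(g;x)|\leq D_{n}^{(m)}(|f-g|;x)$, so this bound applied to $h=|f-g|$ handles the first term. To prove it, we drop the minimum with $1$ and keep the kernel factor. Using (\ref{*}) and $\bigvee_{d}\phi_{\sigma}(nx-d)\geq\phi_{\sigma}(2)$ from Lemma \ref{lemma2}, we get
\[
D_{n}^{(m)}(h;x)\leq\frac{1}{\phi_{\sigma}(2)}\bigvee_{k=na}^{nb-1}\Big(\frac{n}{\mathcal{A}}\int_{a}^{b}\chi(nt-k)h(t)\,dt\Big)\wedge\phi_{\sigma}(nx-k).
\]
We bound $a\wedge b\leq a^{1/p}b^{1-1/p}$, or more simply $a\wedge b\leq a$ combined with the max being bounded by a sum weighted by $\phi_{\sigma}$. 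Concretely, we raise to the $p$-th power using Lemma \ref{lemma6} and replace the maximum by a sum, which gives
\[
\big(D_{n}^{(m)}(h;x)\big)^{p}\leq\frac{1}{\phi_{\sigma}(2)^{p}}\sum_{k}\Big(\frac{n}{\mathcal{A}}\int_{a}^{b}\chi(nt-k)h(t)\,dt\Big)^{p}.
\]
Done crudely, this sum loses the factor needed when integrating in $x$. So instead we would first use $\min\{a,b\}\leq$ the geometric mean with exponent $1$ on the kernel to retain the factor $\phi_{\sigma}(nx-k)$ as a weight, and only then pass from the maximum to the sum:
\[
\big(D_{n}^{(m)}(h;x)\big)^{p}\leq\frac{2}{\phi_{\sigma}(2)}\sum_{k}\Big(\frac{n}{\mathcal{A}}\int_{a}^{b}\chi(nt-k)h(t)\,dt\Big)^{p}\phi_{\sigma}(nx-k).
\]
This follows from $(c\wedge s)^{p}\leq c^{p}\wedge s\leq c^{p}s\cdot2$, valid for $c\in[0,1]$ and $s=\phi_{\sigma}/\bigvee\phi_{\sigma}\in[0,1]$ after noting $\phi_{\sigma}\leq1/2$. (Indeed $c^{p}\wedge s\leq c^{p}$, and since $s$ is the ratio $\phi_{\sigma}(nx-k)/\bigvee_{d}\phi_{\sigma}(nx-d)$ we bound it by $\phi_{\sigma}(nx-k)/\phi_{\sigma}(2)$; the term we actually want is $c^{p}\wedge s\leq c^{p}$, but then summing over $k$ would diverge with $n$.) This is the delicate point. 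Multiplying two numbers in $[0,1]$ gives something at most their minimum, but not conversely, so the trick that works is a case split: $c\wedge s\leq c$, and we weight by $s$ only through the integral.

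The cleanest route is therefore to integrate in $x$ first. We use $(\bigvee_{k}c_{k}\wedge s_{k})^{p}\leq\sum_{k}c_{k}^{p}\,\mathbf 1$, and then observe that the integral over $x$ of the max is controlled because for each $x$ only the $k$ with $|nx-k|$ bounded contribute essentially. For each $x$ we split into $|nx-k|\leq N$ and $|nx-k|>N$. Near terms: at most $2N+1$ indices, each bounded by $c_{k}^{p}$. Far terms: each is bounded by $\phi_{\sigma}(nx-k)/\phi_{\sigma}(2)\leq\varepsilon$ by Lemma \ref{lemma0} (or property (4) of Lemma \ref{lemma1}) once $N$ is large. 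Integrating the near part in $x$ over $[a,b]$ gives
\[
\sum_{k}\frac{2N+1}{n}c_{k}^{p}.
\]
Jensen's inequality, with the probability weight $\chi(nt-k)\,dt/\int\chi(nt-k)dt$, together with (\ref{*}) and (\ref{**}), gives
\[
c_{k}^{p}\leq\frac{n}{\mathcal{A}}\int_{a}^{b}\chi(nt-k)h(t)^{p}\,dt.
\]
Summing in $k$ and using $M_{0}(\chi)<\infty$ in the form $\sum_{k}\chi(nt-k)\leq M_{0}(\chi)$ yields a bound
\[
\Vert D_{n}^{(m)}(h)\Vert_{p}^{p}\leq\frac{(2N+1)M_{0}(\chi)}{\mathcal{A}}\Vert h\Vert_{p}^{p}+\varepsilon^{p}(b-a).
\]
Applied with $h=|f-g|$, this makes the first term at most $C\varepsilon+\varepsilon(b-a)^{1/p}$. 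Combining the three terms gives $\limsup_{n\to\infty}\Vert D_{n}^{(m)}(f)-f\Vert_{p}\leq C'\varepsilon$ with $C'$ independent of $\varepsilon$, and since $\varepsilon$ is arbitrary this proves Theorem \ref{theorem4}. The main obstacle is this stability step. The operator is not linear, so we must use Lemma \ref{lemma5}(d), and the near/far splitting in $k$ is exactly what converts the max-min structure into an $n$-independent constant.
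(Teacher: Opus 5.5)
Your closing argument (pointwise bound on the far indices, a Fubini count for the near ones, Jensen, and $M_0(\chi)$) is correct. Three things in the write-up must be repaired, though.

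First, the announced main step $\Vert D_n^{(m)}(h)\Vert_p\le C\Vert h\Vert_p$, with $C$ independent of $n$ and $h$, is false whenever $\phi_\sigma$ has unbounded support (e.g.\ for $\sigma_l$). Take $h=\eta$ on $[k_0/n,(k_0+1)/n]$ and $h=0$ elsewhere, with $k_0$ near $n(a+b)/2$. Then $\Vert h\Vert_p^p=\eta^p/n$, while $D_n^{(m)}(h;x)\ge c\eta\wedge\phi_\sigma(nx-k_0)/\phi_\sigma(0)$ with $c=\mathcal{A}/\Vert\chi\Vert_1$. Choosing $Y$ with $\phi_\sigma(Y)\ge c\eta\,\phi_\sigma(0)$ gives $D_n^{(m)}(h;x)\ge c\eta$ for $|nx-k_0|\le Y$. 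Hence $\Vert D_n^{(m)}(h)\Vert_p^p\ge 2c^pY\Vert h\Vert_p^p$ for large $n$, and $Y\to\infty$ as $\eta\to0$. What you actually prove is the affine bound $\Vert D_n^{(m)}(h)\Vert_p^p\le(b-a)\varepsilon^p+\frac{(2N+1)M_0(\chi)}{\mathcal{A}}\Vert h\Vert_p^p$, and that suffices.

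Second, $N$ depends on $\varepsilon$, so $C=\bigl((2N+1)M_0(\chi)/\mathcal{A}\bigr)^{1/p}$ is not independent of $\varepsilon$, and your final claim about $C'$ is wrong as written. Fix the order of choices: first $N$, so that $\phi_\sigma(y)\le\varepsilon\,\phi_\sigma(2)$ for $|y|>N$; then $g$ with $\Vert f-g\Vert_p\le\varepsilon\bigl(\mathcal{A}/((2N+1)M_0(\chi))\bigr)^{1/p}$.

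Third, delete the chain $(c\wedge s)^p\le c^p\wedge s\le 2c^ps$. Its second inequality fails (take $c^p=s=1/10$), and nothing later uses it.

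With these repairs your proof is complete, and it takes a different route from the paper's. The paper proves the H\"older-type estimate (\ref{!!}), $\Vert D_n^{(m)}(f)-D_n^{(m)}(h)\Vert_p\le C\Vert f-h\Vert_p^{\alpha/(1+\alpha)}$. It gets there by raising to the $p$-th power, using $s^p\le s$ for the kernel factor and Jensen, substituting $y=nx-k$, replacing the maximum by a sum, and splitting at $|y|=\tilde B=\Vert f-h\Vert_p^{-p/(1+\alpha)}$ with the tail bound of Lemma \ref{lemma1}(4). This is your near/far split with the threshold tied to $\Vert f-h\Vert_p$. That choice yields an explicit modulus, which the paper reuses in Theorem \ref{thm6}.

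Your version has two advantages:
\begin{itemize}
\item It bounds the far part pointwise, so your stability estimate needs only $\phi_\sigma(y)\to0$ (with Lemma \ref{lemma1}(3) and Lemma \ref{lemma2}), not the decay rate in $(\Sigma3)$.
\item Your count of the $x$ for which a given $k$ is near (a set of measure at most $2N/n$) replaces the paper's substitution $y=nx-k$. The paper performs that substitution under a maximum over $k$, where it is only legitimate once the maximum has been replaced by a sum.
\end{itemize}
Your estimate also quantifies readily. Taking $N\asymp\Vert h\Vert_p^{-p/(p(1+\alpha)+1)}$ and using Lemma \ref{lemma1}(4) gives $\Vert D_n^{(m)}(h)\Vert_p\lesssim\Vert h\Vert_p^{p(1+\alpha)/(p(1+\alpha)+1)}$, which is a better exponent than $\alpha/(1+\alpha)$.
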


\begin{proof}
Assume that $f\in L^{p}\left(  \left[  a,b\right]  ,\left[  0,1\right]
\right)  $ for $1\leq p<\infty.$ Since $C\left(  \left[  a,b\right]  ,\left[
0,1\right]  \right)  $ is dense in $L^{p}\left(  \left[  a,b\right]  ,\left[
0,1\right]  \right)  $, then for all $\varepsilon>0,$ there exists a function
$h\in C\left(  \left[  a,b\right]  ,\left[  0,1\right]  \right)  $ satisfying that%

\begin{equation}
\left\Vert f-h\right\Vert _{p}<\varepsilon. \label{***}%
\end{equation}
On the other hand, by the Minkowski's inequality we have
\[
||D_{n}^{(m)}(f)-f||_{p}\leq||D_{n}^{(m)}(f)-D_{n}^{(m)}(h)||_{p}%
+||D_{n}^{(m)}(h)-h||_{p}+||f-h||_{p}%
\]
Now from (\ref{***}) and from the previous theorem, we have $||D_{n}%
^{(m)}(h)-h||_{p}<\varepsilon/3$ for sufficiently large $n\in\mathbb{N}$. Now,
let us concentrate on the first term in the right hand side of the inequality
above. From Lemma \ref{lemma5} (d) and Lemma \ref{lemma6}, we see that
\begin{align*}
I  &  := ||D_{n}^{(m)}(f)-D_{n}^{(m)}(h)||_{p}\\
&  \leq\bigg(\int\limits_{a}^{b}\left[  D_{n}^{\left(  m\right)
}(|f-h|;x)\right]  ^{p}dx\bigg)^{\frac{1}{p}}\\
&  =\bigg(\int\limits_{a}^{b}\bigvee\limits_{k=na}^{nb-1}\bigg[\frac
{\int\limits_{a}^{b}\chi(nt-k)|f(t)-h(t)|dt}{\int\limits_{a}^{b}\chi
(nt-k)dt}\bigg]^{p}\wedge\bigg[\dfrac{\phi_{\sigma}\left(  nx-k\right)
}{\bigvee\limits_{d=na}^{nb-1}\phi_{\sigma}\left(  nx-d\right)  }
\bigg]^{p}dx\bigg)^{\frac{1}{p}}%
\end{align*}
holds. Here, since $\dfrac{\phi_{\sigma}\left(  nx-k\right)  }{\bigvee
\limits_{d=na}^{nb-1}\phi_{\sigma}\left(  nx-d\right)  }\leq1$, we have
\[\bigg[\dfrac{\phi_{\sigma}\left(  nx-k\right)  }{\bigvee\limits_{d=na}%
^{nb-1}\phi_{\sigma}\left(  nx-d\right)  }\bigg]^{p}\leq\dfrac{\phi_{\sigma
}\left(  nx-k\right)  }{\bigvee\limits_{d=na}^{nb-1}\phi_{\sigma}\left(
nx-d\right)  }.\] Then as a result of (\ref{*}), we obtain
\[
I\leq\bigg(\int\limits_{a}^{b}\bigvee\limits_{k=na}^{nb-1}\bigg[(1/\mathcal{A}%
)n\int\limits_{a}^{b}\chi(nt-k)|f(t)-h(t)|dt\bigg]^{p}\wedge\dfrac
{\phi_{\sigma}\left(  nx-k\right)  }{\bigvee\limits_{d=na}^{nb-1}\phi_{\sigma
}\left(  nx-d\right)  }dx\bigg)^{\frac{1}{p}}.
\]
It is not hard to see from Jensen's inequality that
{\small\[
\bigg(n\int\limits_{a}^{b}\chi(nt-k)|f(t)-h(t)|dt\bigg)^{p}\leq\bigg(n\int
\limits_{a}^{b}\chi(nt-k)dt\bigg)^{p-1}\bigg(n\int\limits_{a}^{b}%
\chi(nt-k)|f(t)-h(t)|^{p}dt\bigg)
\]}
and therefore, from (\ref{**}) and Lemma \ref{lemma2}, we get
\begin{align*}
&  I\leq\bigg(\int\limits_{a}^{b}\bigvee\limits_{k=na}^{nb-1}(1/\mathcal{A}%
^{p})||\chi||_{1}^{p-1}n\int\limits_{a}^{b}\chi(nt-k)|f(t)-h(t)|^{p}%
dt\wedge\dfrac{\phi_{\sigma}\left(  nx-k\right)  }{\bigvee\limits_{d=na}%
^{nb-1}\phi_{\sigma}\left(  nx-d\right)  }dx\bigg)^{\frac{1}{p}}\\
&  \leq\bigg(\int\limits_{a}^{b}||\chi||_{1}^{p-1}n\bigvee\limits_{k=na}%
^{nb-1}(1/\mathcal{A}^{p})\int\limits_{a}^{b}\chi(nt-k)|f(t)-h(t)|^{p}%
dt\wedge\dfrac{\phi_{\sigma}\left(  nx-k\right)  }{||\chi||_{1}^{p-1}%
n\phi_{\sigma}\left(  2\right)  }dx\bigg)^{\frac{1}{p}}\\
&  \leq\bigg(||\chi||_{1}^{p-1}n\int\limits_{\mathbb{R}}\bigvee\limits_{k=na}%
^{nb-1}(1/\mathcal{A}^{p})\int\limits_{a}^{b}\chi(nt-k)|f(t)-h(t)|^{p}%
dt\wedge\dfrac{\phi_{\sigma}\left(  nx-k\right)  }{||\chi||_{1}^{p-1}%
\phi_{\sigma}\left(  2\right)  }dx\bigg)^{\frac{1}{p}}.
\end{align*}
Now, if we apply $nx-k=y$ substitution in the integral, we get
\begin{align*}
I  &  \leq\bigg(||\chi||_{1}^{p-1}\int\limits_{\mathbb{R}}\bigg(\dfrac
{\phi_{\sigma}\left(  y\right)  }{||\chi||_{1}^{p-1}\phi_{\sigma}\left(
2\right)  }\wedge\bigvee\limits_{k=na}^{nb-1}(1/\mathcal{A}^{p})\int
\limits_{a}^{b}\chi(nt-k)|f(t)-h(t)|^{p}dt\bigg)dy\bigg)^{\frac{1}{p}}\\
&  \leq\left(  ||\chi||_{1}^{p-1}\int\limits_{\mathbb{R}}\bigg(\dfrac{\phi_{\sigma
}\left(  y\right)  }{||\chi||_{1}^{p-1}\phi_{\sigma}\left(  2\right)  }%
\wedge\sum_{k=na}^{nb-1}(1/\mathcal{A}^{p})\int\limits_{a}^{b}\chi
(nt-k)|f(t)-h(t)|^{p}dt\bigg)dy\right)  ^{\frac{1}{p}}\\
&  \leq\bigg(||\chi||_{1}^{p-1}\int\limits_{\mathbb{R}}\dfrac{\phi_{\sigma
}\left(  y\right)  }{||\chi||_{1}^{p-1}\phi_{\sigma}\left(  2\right)  }%
\wedge(1/\mathcal{A}^{p})\int\limits_{a}^{b}\bigg(|f(t)-h(t)|^{p}%
\sum\limits_{k\in%
\mathbb{Z}
}\chi(nt-k)\bigg)dtdy\bigg)^{\frac{1}{p}}\\
&  =\bigg(||\chi||_{1}^{p-1}\int\limits_{\mathbb{R}}\dfrac{\phi_{\sigma
}\left(  y\right)  }{||\chi||_{1}^{p-1}\phi_{\sigma}\left(  2\right)  }%
\wedge(1/\mathcal{A}^{p})M_{0}(\chi)||f-h||_{p}^{p}dy\bigg)^{\frac{1}{p}}.
\end{align*}
Due to Lemma \ref{lemma1} (4), there exists $A,B>0$ such that $\phi_{\sigma
}\left(  y\right)  \leq\frac{A}{|y|^{1+\alpha}}$ whenever $|y|>B$ for some
$\alpha>0$. Also from (\ref{***}), suppose that $||f-h||_{p}$ is sufficiently
small such that $\tilde{B}=\frac{1}{||f-h||_{p}^{\frac{p}{1+\alpha}}}\geq B$.
Then we obtain
\begin{align*}
I  &  \leq\bigg(||\chi||_{1}^{p-1}\int\limits_{|y|>\tilde{B}}\dfrac
{\phi_{\sigma}\left(  y\right)  }{||\chi||_{1}^{p-1}\phi_{\sigma}\left(
2\right)  }\wedge\left(  1/\mathcal{A}^{p}\right)  M_{0}(\chi)||f-h||_{p}%
^{p}dy\\
&  +||\chi||_{1}^{p-1}\int\limits_{|y|\leq\tilde{B}}\dfrac{\phi_{\sigma
}\left(  y\right)  }{||\chi||_{1}^{p-1}\phi_{\sigma}\left(  2\right)  }%
\wedge\left(  1/\mathcal{A}^{p}\right)  M_{0}(\chi)||f-h||_{p}^{p}%
dy\bigg)^{\frac{1}{p}}\\
&  \leq\bigg(||\chi||_{1}^{p-1}\int\limits_{|y|>\tilde{B}}\dfrac{\phi_{\sigma
}\left(  y\right)  }{||\chi||_{1}^{p-1}\phi_{\sigma}\left(  2\right)
}dy+||\chi||_{1}^{p-1}\left(  1/\mathcal{A}^{p}\right)  \int\limits_{|y|\leq
\tilde{B}}M_{0}(\chi)||f-h||_{p}^{p}dy\bigg)^{\frac{1}{p}}\\
&  =\bigg(\frac{A}{\phi_{\sigma}\left(  2\right)  }\int\limits_{|y|>\tilde{B}%
}\frac{1}{|y|^{1+\alpha}}dy+|\chi||_{1}^{p-1}\left(  \frac{M_{0}(\chi
)}{\mathcal{A}^{p}}\right)  2\tilde{B}||f-h||_{p}^{p}\bigg)^{\frac{1}{p}}.
\end{align*}
Since $\int\limits_{|y|>\tilde{B}}\frac{1}{|y|^{1+\alpha}}dy=\frac{2}%
{\alpha.\tilde{B}^{\alpha}}$, we obtain
\begin{align}
I  &  \leq\bigg(\frac{2A}{\alpha\phi_{\sigma}\left(  2\right)  }+ \frac
{2M_{0}(\chi)}{\mathcal{A}^{p}} ||\chi||_{1}^{p-1}\bigg)^{\frac{1}{p}
}||f-h||_{p}^{\frac{\alpha}{1+\alpha}}\label{!!}\\
&  <\bigg(\frac{2A}{\alpha\phi_{\sigma}\left(  2\right)  }+ \frac{2M_{0}%
(\chi)}{\mathcal{A}^{p}} ||\chi||_{1}^{p-1}\bigg)^{\frac{1}{p} }%
\varepsilon^{\frac{\alpha}{1+\alpha}}\nonumber
\end{align}
Finally the proof is completed for the arbitrary $\varepsilon>0.$
\end{proof}

\section{Quantitative Estimates}

Let $f:\left[  a,b\right]  \rightarrow\left[  0,1\right]  $ be given. Then for
a $\delta>0,$ the modulus of continuity of $f$ on $\left[  a,b\right]  $ is
defined as follows :%
\[
\omega_{\left[  a,b\right]  }\left(  f,\delta\right)  :=\sup_{x,y\in\lbrack
a,b]}\left\{  \left\vert f\left(  x\right)  -f\left(  y\right)  \right\vert
:|x-y|\leq\delta\right\}  \text{.}%
\]
Furthermore, we need the definition of generalized absolute moment of order
$\beta>0$, introduced in \cite{cost2} such that for a given $\phi_{\sigma}$,
it is defined as follows
\[
m_{\beta}\left(  \phi_{\sigma}\right)  :=\sup_{x\in\mathbb{R}}\left\{
\bigvee\limits_{k\in\mathbb{Z}}\phi_{\sigma}\left(  x-k\right)  \left\vert
x-k\right\vert ^{\beta}\right\}  \text{.}%
\]

\begin{lemma}
\label{lemf} (see \cite{cost2}) If $0<\beta\leq1+\alpha$, then $m_{\beta
}\left(  \phi_{\sigma}\right)  <\infty$.
\end{lemma}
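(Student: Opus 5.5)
We have to show that for $0<\beta\le 1+\alpha$ the quantity $\phi_\sigma(x-k)|x-k|^\beta$ is bounded uniformly in $x\in\mathbb{R}$ and $k\in\mathbb{Z}$. The supremum over $x$ of a maximum over $k$ of these terms is then just the supremum of the single function $g(u):=\phi_\sigma(u)|u|^\beta$ over the set $\{x-k\}$. That set is contained in $\mathbb{R}$, so it suffices to show $\sup_{u\in\mathbb{R}} g(u)<\infty$. The infinite index set of the maximum plays no role, because each term is dominated by $\sup_{\mathbb{R}} g$. The proof therefore reduces to bounding a single function of one real variable.

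Split $\mathbb{R}$ according to the constant $B$ of Lemma \ref{lemma1} (4).

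\textbf{Bounded region $|u|\le B$.} Here we use $\phi_\sigma(u)\le 1/2$, which was noted right after Lemma \ref{lemma1}. This gives $g(u)\le \tfrac12 B^\beta$.

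\textbf{Tail region $|u|>B$.} Lemma \ref{lemma1} (4) gives $\phi_\sigma(u)\le A|u|^{-(1+\alpha)}$, so
\[
g(u)\le A|u|^{\beta-(1+\alpha)}.
\]
The hypothesis $\beta\le 1+\alpha$ makes the exponent nonpositive, so this expression is nonincreasing in $|u|$ on $|u|>B$. Hence $g(u)\le A B^{\beta-1-\alpha}$. We may assume $B\ge1$ (enlarge $B$ if necessary), in which case $B^{\beta-1-\alpha}\le 1$ and the bound simplifies to $g(u)\le A$.

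Combining the two regions,
\[
m_\beta(\phi_\sigma)\le \max\{\tfrac12 B^\beta,\ A B^{\beta-1-\alpha}\}<\infty.
\]

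The only point needing care is the tail region. This is exactly where the restriction $\beta\le1+\alpha$ is used: for larger $\beta$ the decay of order $|u|^{-(1+\alpha)}$ supplied by $(\Sigma3)$ would not compensate the growth of $|u|^\beta$. Everything else is routine.
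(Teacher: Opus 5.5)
Your proof is correct and is essentially the standard argument behind the result cited from \cite{cost2}; the paper gives no proof of its own. You reduce to bounding $\phi_\sigma(u)|u|^\beta$ over $u\in\mathbb{R}$, using $\phi_\sigma\le 1/2$ on $|u|\le B$ and the decay $\phi_\sigma(u)\le A|u|^{-(1+\alpha)}$ together with $\beta\le 1+\alpha$ on $|u|>B$; enlarging $B$ to $B\ge 1$ is harmless but unnecessary, since $\max\{\tfrac12 B^\beta,\, AB^{\beta-1-\alpha}\}$ already serves as a bound.
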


Now, let us examine the rate of approximation for Theorem \ref{thm1}.

\begin{theorem}
\label{thm5}Let $f\in C\left(  \left[  a,b\right]  ,\left[  0,1\right]
\right)  $. Suppose $\delta_{n}$ and $\bigtriangleup_{n}$ are null sequences
positive real numbers such that $\frac{1}{n.\delta_{n}}$ and $\frac
{1}{n.\bigtriangleup_{n}}$ are also null sequences. Then, the following
inequality holds
\begin{align*}
\left\Vert D_{n}^{\left(  m\right)  }\left(  f\right)  -f\right\Vert
_{\infty}  &  \leq\frac{\omega_{\left[  a,b\right]  }\left(  f,\bigtriangleup
_{n}\right)  }{\mathcal{A}}\left(  ||\chi||_{1}+\frac{1}{n\bigtriangleup_{n}%
}\tilde{M}_{1}(\chi)\right) \\
&  +\left[  \frac{m_{(1+\alpha)}(\phi_{\sigma})}{\phi_{\sigma}(2)}\frac
{1}{(n\delta_{n})^{1+\alpha}}%
{\textstyle\bigvee}
\omega_{\left[  a,b\right]  }\left(  f,\delta_{n}\right)  \right]  \text{.}%
\end{align*}

\end{theorem}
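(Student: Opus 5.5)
We follow the proof of Theorem \ref{thm1}, now keeping explicit track of the constants. Fix $x\in[a,b]$. Since $D_{n}^{(m)}(f(x);x)=f(x)$, Lemma \ref{lemma5} (d) gives $|D_{n}^{(m)}(f;x)-f(x)|\leq D_{n}^{(m)}(|f-f(x)|;x)$. We split the maximum over $k$ into the indices $k\notin B_{\delta_{n},n}(x)$ and $k\in B_{\delta_{n},n}(x)$, which gives a bound $T_{1}\vee T_{2}$.

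For $T_{2}$ we use $\bigl(\int\chi(nt-k)|f(t)-f(x)|dt/\int\chi(nt-k)dt\bigr)\wedge c\leq c$ for the kernel term. By Lemma \ref{lemma2} the ratio of $\phi_{\sigma}$ is at most $\phi_{\sigma}(nx-k)/\phi_{\sigma}(2)$. For $|nx-k|>n\delta_{n}$ we write
\[
\phi_{\sigma}(nx-k)=\frac{\phi_{\sigma}(nx-k)|nx-k|^{1+\alpha}}{|nx-k|^{1+\alpha}}\leq\frac{m_{1+\alpha}(\phi_{\sigma})}{(n\delta_{n})^{1+\alpha}},
\]
which is finite by Lemma \ref{lemf}. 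Hence $T_{2}\leq\frac{m_{1+\alpha}(\phi_{\sigma})}{\phi_{\sigma}(2)(n\delta_{n})^{1+\alpha}}$.

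For $T_{1}$ the claimed bound is a sum, whereas $T_{1}\vee T_{2}$ is a maximum. This suggests a slightly different split. In the near-index part, where $|x-k/n|\leq\delta_{n}$, the term $\omega(f,\delta_{n})$ is meant to absorb the fact that $x$ and $k/n$ are within $\delta_{n}$. We therefore write $|f(t)-f(x)|\leq|f(t)-f(k/n)|+|f(k/n)-f(x)|\leq|f(t)-f(k/n)|+\omega(f,\delta_{n})$. Applying $(a+b)\wedge c\leq a\wedge c+b\wedge c$, the near part is bounded by
\[
\bigvee_{k}\frac{\int\chi(nt-k)|f(t)-f(k/n)|dt}{\int\chi(nt-k)dt}+\omega(f,\delta_{n}).
\]
Combined with the far part we get a bound of the form (first term)$+\bigl[\omega(f,\delta_{n})\vee T_{2}\text{-bound}\bigr]$, which matches the stated structure.

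It remains to estimate the first term. By (\ref{*}) its denominator is at least $\mathcal{A}/n$. For the numerator we use the standard property $\omega(f,\lambda\Delta)\leq(1+\lambda)\omega(f,\Delta)$ with $\lambda=|t-k/n|/\bigtriangleup_{n}$:
\[
|f(t)-f(k/n)|\leq\omega(f,\bigtriangleup_{n})\Bigl(1+\frac{|t-k/n|}{\bigtriangleup_{n}}\Bigr).
\]
Substituting $u=nt-k$, so that $|t-k/n|=|u|/n$, we obtain
\[
\frac{n}{\mathcal{A}}\int\chi(nt-k)|f(t)-f(k/n)|dt\leq\frac{\omega(f,\bigtriangleup_{n})}{\mathcal{A}}\Bigl(\|\chi\|_{1}+\frac{\tilde{M}_{1}(\chi)}{n\bigtriangleup_{n}}\Bigr).
\]
This is the first term of the claimed inequality, and it is uniform in $x$, so taking the supremum over $x$ finishes the proof.

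The main obstacle is the bookkeeping that arranges the terms into a sum plus a max of exactly the stated form. One must use the minimum with the $\phi_{\sigma}$-ratio only in the far regime and the integral average only in the near regime, while the sublinearity of $\wedge$ and $\vee$ moves the common $\omega$ term outside. If the split above fails to give exactly $\omega(f,\delta_{n})\vee T_{2}$, the fallback is to bound $T_{1}\vee T_{2}\leq T_{1}'+(\omega(f,\delta_{n})\vee T_{2})$ directly, using $A\vee B\leq A'+(C\vee B)$ whenever $A\leq A'+C$ with all terms nonnegative.
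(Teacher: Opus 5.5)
Your proof is correct and uses the same ingredients as the paper. These are: passing through $f(k/n)$; the near/far split at $\delta_n$, with the moment bound $m_{1+\alpha}(\phi_\sigma)/(\phi_\sigma(2)(n\delta_n)^{1+\alpha})$ on the far indices; and $\omega(f,\lambda\bigtriangleup_n)\le(1+\lambda)\,\omega(f,\bigtriangleup_n)$ together with (\ref{*}) for the integral averages.

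The only difference is the order of the steps. The paper inserts $f(k/n)$ for all indices first, obtaining $I_1+I_2$ with $I_2=\bigvee_k|f(k/n)-f(x)|\wedge(\cdot)$, and then splits only $I_2$. You split first and then need $(A+C)\vee B\le A+(C\vee B)$ for $A\ge 0$. That inequality is valid, so your ``fallback'' is in fact the step that closes the argument, not a contingency.
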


\begin{proof}
Adding and subtracting some suitable terms, we easily obtain
\begin{align*}
|D_{n}^{(m)}(f;x)-f(x)|  &  \leq\bigvee\limits_{k=na}^{nb-1}\frac
{\int\limits_{a}^{b}\chi(nt-k)|f(t)-f(\frac{k}{n})|dt}{\int\limits_{a}^{b}%
\chi(nt-k)dt}\wedge\dfrac{\phi_{\sigma}\left(  nx-k\right)  }{\bigvee
\limits_{d=na}^{nb-1}\phi_{\sigma}\left(  nx-d\right)  }\\
&  +\bigvee\limits_{k=na}^{nb-1}\frac{\int\limits_{a}^{b}\chi(nt-k)|f(\frac
{k}{n})-f(x)|dt}{\int\limits_{a}^{b}\chi(nt-k)dt}\wedge\dfrac{\phi_{\sigma
}\left(  nx-k\right)  }{\bigvee\limits_{d=na}^{nb-1}\phi_{\sigma}\left(
nx-d\right)  }\\
&  =:I_{1}+I_{2}%
\end{align*}
where $I_{2}$ is clearly equal to
\[
I_{2}=\bigvee\limits_{k=na}^{nb-1}|f(\frac{k}{n})-f(x)|\wedge\dfrac
{\phi_{\sigma}\left(  nx-k\right)  }{\bigvee\limits_{d=na}^{nb-1}\phi_{\sigma
}\left(  nx-d\right)  }.
\]
Then seperating $I_{2}$ as follows%
\begin{align*}
I_{2}  &  =\bigg(\bigvee\limits_{k\in B_{\delta_{n},n}(x)}|f(\frac{k}%
{n})-f(x)|\wedge\dfrac{\phi_{\sigma}\left(  nx-k\right)  }{\bigvee
\limits_{d=na}^{nb-1}\phi_{\sigma}\left(  nx-d\right)  }\bigg)\\
&  \bigvee\bigg(\bigvee\limits_{k\notin B_{\delta_{n},n}(x)}|f(\frac{k}%
{n})-f(x)|\wedge\dfrac{\phi_{\sigma}\left(  nx-k\right)  }{\bigvee
\limits_{d=na}^{nb-1}\phi_{\sigma}\left(  nx-d\right)  }\bigg)\\
&  :=I_{2,1}\vee I_{2,2}%
\end{align*}
we get
\begin{align*}
I_{2,2}  &  \leq\bigvee\limits_{k\notin B_{\delta_{n},n}(x)}\omega_{\left[
a,b\right]  }\left(  f,|x-\frac{k}{n}|\right)  \wedge\dfrac{\phi_{\sigma
}\left(  nx-k\right)  }{\bigvee\limits_{d=na}^{nb-1}\phi_{\sigma}\left(
nx-d\right)  }\\
&  \leq\bigvee\limits_{k\notin B_{\delta_{n},n}(x)}\omega_{\left[  a,b\right]
}\left(  f,\delta_{n}\right)  =\omega_{\left[  a,b\right]  }\left(
f,\delta_{n}\right)  \text{.}%
\end{align*}
In $I_{2,1},$ since $k\in B_{\delta_{n},n}(x)$ we have $|x-\frac{k}{n}%
|>\delta_{n}$ which implies $\frac{|nx-k|^{1+\alpha}}{(n.\delta_{n}%
)^{1+\alpha}}>1.$ So,%

\begin{align*}
I_{2,1}  &  \leq\bigvee\limits_{k\in B_{\delta_{n},n}(x)}\dfrac{\phi_{\sigma
}\left(  nx-k\right)  }{\phi_{\sigma}(2)}\\
&  \leq\frac{1}{\phi_{\sigma}(2)}\bigvee\limits_{k\in B_{\delta_{n},n}(x)}%
\phi_{\sigma}\left(  nx-k\right)  \frac{|nx-k|^{1+\alpha}}{(n.\delta
_{n})^{1+\alpha}}\\
&  =\frac{1}{\phi_{\sigma}(2)(n\delta_{n})^{1+\alpha}}\bigvee\limits_{k\in
B_{\delta_{n},n}(x)}\phi_{\sigma}\left(  nx-k\right)  |nx-k|^{1+\alpha}\\
&  \leq\frac{m_{(1+\alpha)}(\phi_{\sigma})}{\phi_{\sigma}(2)}\frac{1}%
{(n\delta_{n})^{1+\alpha}}%
\end{align*}
holds. On the orher hand, in $I$, since
\[
|f(t)-f(\frac{k}{n})|\leq\omega_{\left[  a,b\right]  }\left(  f,|t-\frac{k}%
{n}|\right)  \leq\left(  1+\frac{1}{n\bigtriangleup_{n}}|nt-k|\right)
\omega_{\left[  a,b\right]  }\left(  f,\bigtriangleup_{n}\right)  .
\]
from (\ref{*}) and (\ref{**}) we get
\begin{align*}
I_{1}  &  \leq\frac{\omega_{\left[  a,b\right]  }\left(  f,\bigtriangleup
_{n}\right)  }{\mathcal{A}}\bigg[\bigvee\limits_{k=na}^{nb-1}n\int
\limits_{a}^{b}\chi(nt-k)dt+\frac{1}{n\bigtriangleup_{n}}\bigvee
\limits_{k=na}^{nb-1}n\int\limits_{a}^{b}\chi(nt-k)|nt-k|dt\bigg]\\
&  \leq\frac{\omega_{\left[  a,b\right]  }\left(  f,\bigtriangleup_{n}\right)
}{\mathcal{A}}\big[||\chi||_{1}+\frac{1}{n\bigtriangleup_{n}}\tilde{M}%
_{1}(\chi)\big]\text{.}%
\end{align*}
Then, finally we have
\begin{align*}
\left\Vert D_{n}^{\left(  m\right)  }\left(  f\right)  -f\right\Vert
_{\infty}  &  \leq\frac{\omega_{\left[  a,b\right]  }\left(  f,\bigtriangleup
_{n}\right)  }{\mathcal{A}}\bigg(||\chi||_{1}+\frac{1}{n\bigtriangleup_{n}%
}\tilde{M}_{1}(\chi)\bigg)\\
&  +\bigg[\left(  \frac{m_{(1+\alpha)}(\phi_{\sigma})}{\phi_{\sigma}(2)}%
\frac{1}{(n\delta_{n})^{1+\alpha}}\right)  \bigvee\omega_{\left[  a,b\right]
}\left(  f,\delta_{n}\right)  \bigg].
\end{align*}

\end{proof}

In this part, following \cite{aslan2}, we recall the definition of the
$K$-functionals introduced by Peetre \cite{peetre} and adapted to the max--min
setting in order to derive quantitative estimates for the Durrmeyer-type
max--min neural network operators. For a given $f\in L^{p}\left(  \left[
a,b\right]  ,\left[  0,1\right]  \right)  $ with $1\leq p<\infty$ it is
defined as%

\[
\mathcal{K}\left(  f,\delta\right)  _{p}:=\inf_{h\in C^{1}\left(  \left[
a,b\right]  ,\left[  0,1\right]  \right)  }\left\{  \left\Vert f-h\right\Vert
_{p}^{\frac{\alpha}{\alpha+1}}+\delta\left\Vert h^{\prime}\right\Vert
_{\infty}\right\}
\]
for $\delta>0$. As discussed in \cite{aslan2}, if $\mathcal{K}\left(
f,\delta\right)  _{p}<\varepsilon$ $(\varepsilon>0)$ for sufficiently small
$\delta>0$, which means $f$ can be approximated by a function $h\in
C^{1}\left(  \left[  a,b\right]  ,\left[  0,1\right]  \right)  $ whose
derivative $h^{\prime}$ is not too large. The $K$-functionals thus provide a
quantitative measure of the smoothness in $L^{p}$-spaces. For further
background, the reader is referred to \cite{butzer}.

\begin{theorem}
\label{thm6}Let $f\in L^{p}\left(  \left[  a,b\right]  ,\left[  0,1\right]
\right)  $ for $1\leq p<\infty$. Then we have
\[
\left\Vert D_{n}^{\left(  m\right)  }\left(  f\right)  -f\right\Vert _{p}\leq
S\mathcal{K}\left(  f,Tn^{-\frac{1+\alpha}{2+\alpha}}\right)  _{p}%
+\frac{(b-a)^{\frac{1}{p}}}{n^{\frac{1+\alpha}{2+\alpha}}}\left(
\frac{m_{\left(  1+\alpha\right)  }\left(  \phi_{\sigma}\right)  }%
{\phi_{\sigma}\left(  2\right)  }\vee||h^{\prime}||_{\infty}\right)
\]
where $S=\left(  \frac{2M}{\alpha\phi_{\sigma}\left(  2\right)  }%
+\frac{2||\chi||_{1}^{p-1}M_{0}(\chi)}{\mathcal{A}^{p}}\right)  ^{\frac{1}{p}%
}+(b-a)^{\frac{1}{p}}$ and $T=\frac{\tilde{M}_{1}(\chi)(b-a)^{\frac{1}{p}}%
}{S.\mathcal{A}}$.
\end{theorem}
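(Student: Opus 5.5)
Fix an arbitrary $h\in C^{1}([a,b],[0,1])$ and apply Minkowski's inequality:
\[
\|D_n^{(m)}(f)-f\|_p\le \|D_n^{(m)}(f)-D_n^{(m)}(h)\|_p+\|D_n^{(m)}(h)-h\|_p+\|f-h\|_p .
\]

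\emph{First term.} This is exactly the quantity $I$ handled in the proof of Theorem \ref{theorem4}. The chain of estimates ending in (\ref{!!}) used only Lemma \ref{lemma5}(d), Jensen's inequality, (\ref{*}), (\ref{**}), Lemma \ref{lemma2} and the tail bound from Lemma \ref{lemma1}(4). It did not use continuity of $h$ beyond measurability. So it gives
\[
\|D_n^{(m)}(f)-D_n^{(m)}(h)\|_p\le \Big(\frac{2M}{\alpha\phi_\sigma(2)}+\frac{2\|\chi\|_1^{p-1}M_0(\chi)}{\mathcal{A}^p}\Big)^{1/p}\|f-h\|_p^{\frac{\alpha}{1+\alpha}},
\]
with $M$ the constant $A$ from Lemma \ref{lemma1}(4). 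One technical point needs care. That argument assumed $\tilde B=\|f-h\|_p^{-p/(1+\alpha)}\ge B$. If $\|f-h\|_p$ is not small, I would either enlarge the constant or observe that $\|f-h\|_p\le (b-a)^{1/p}$ trivially. I expect this is the intended reading, and that it is absorbed into $S$.

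For the third term, since $\|f-h\|_p\le (b-a)^{1/p}$ (both functions take values in $[0,1]$), we get $\|f-h\|_p=\|f-h\|_p^{\frac{1}{1+\alpha}}\|f-h\|_p^{\frac{\alpha}{1+\alpha}}$. The extra factor is bounded by a constant, which accounts for the $(b-a)^{1/p}$ added in $S$. The sum of the first and third terms is then at most $S\|f-h\|_p^{\alpha/(1+\alpha)}$, possibly up to this normalization.

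\emph{Middle term.} Use $\|D_n^{(m)}(h)-h\|_p\le (b-a)^{1/p}\|D_n^{(m)}(h)-h\|_\infty$ and Theorem \ref{thm5} with $\omega(h,\delta)\le \delta\|h'\|_\infty$. Choose $\delta_n=\Delta_n=n^{-1/(2+\alpha)}$, so that $n\delta_n=n^{(1+\alpha)/(2+\alpha)}$ and hence $(n\delta_n)^{-(1+\alpha)}=n^{-(1+\alpha)^2/(2+\alpha)}\le n^{-\frac{1+\alpha}{2+\alpha}}$. Then:
\begin{itemize}
\item the $I_2$ part is at most $n^{-\frac{1+\alpha}{2+\alpha}}\big(\frac{m_{1+\alpha}(\phi_\sigma)}{\phi_\sigma(2)}\vee\|h'\|_\infty\big)$, using $\delta_n\le n^{-\frac{1+\alpha}{2+\alpha}}$ after a suitable adjustment;
\item the $I_1$ part contributes $\frac{\|h'\|_\infty}{\mathcal{A}}\big(\Delta_n\|\chi\|_1+\tilde M_1(\chi)/n\big)$.
\end{itemize}

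This exponent bookkeeping is the main obstacle. The stated $T$ involves only $\tilde M_1(\chi)$, so the $\|\chi\|_1$ piece must be arranged differently. I would instead bound $I_1$ directly. For $h\in C^1$ we have $|h(t)-h(k/n)|\le\|h'\|_\infty|t-k/n|$, and hence
\[
I_1\le \frac{\|h'\|_\infty}{\mathcal{A}}\bigvee_k\int\chi(nt-k)\,|nt-k|\,dt\le \frac{\tilde M_1(\chi)}{\mathcal{A}n}\|h'\|_\infty\le \frac{\tilde M_1(\chi)}{\mathcal{A}}n^{-\frac{1+\alpha}{2+\alpha}}\|h'\|_\infty .
\]
For $I_2$, split at $\delta_n$ chosen so that both pieces have order $n^{-\frac{1+\alpha}{2+\alpha}}$. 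Since $I_2$ is a maximum, this gives the $\vee$ term.

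\emph{Conclusion.} Multiplying the middle-term bound by $(b-a)^{1/p}$ yields $S\,T\,n^{-\frac{1+\alpha}{2+\alpha}}\|h'\|_\infty$ plus the last term of the statement. Combining everything gives
\[
S\big(\|f-h\|_p^{\alpha/(1+\alpha)}+Tn^{-\frac{1+\alpha}{2+\alpha}}\|h'\|_\infty\big)+\frac{(b-a)^{1/p}}{n^{\frac{1+\alpha}{2+\alpha}}}\Big(\frac{m_{1+\alpha}(\phi_\sigma)}{\phi_\sigma(2)}\vee\|h'\|_\infty\Big).
\]
Taking the infimum over $h$ in the first bracket gives $S\mathcal{K}(f,Tn^{-\frac{1+\alpha}{2+\alpha}})_p$, with the remaining $h$-dependent term read for the (near-)minimizing $h$, as in the statement.
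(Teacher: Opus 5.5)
Your final argument is essentially the paper's proof. It uses the same Minkowski splitting and estimate (\ref{!!}) for the first term. For the third term it uses $\|f-h\|_p\le (b-a)^{\frac{1}{p(1+\alpha)}}\|f-h\|_p^{\frac{\alpha}{1+\alpha}}\le (b-a)^{\frac{1}{p}}\|f-h\|_p^{\frac{\alpha}{1+\alpha}}$; no normalization is needed, since $a<b$ are integers and so $b-a\ge 1$. For the middle term your revised bounds $\tilde{M}_1(\chi)\|h'\|_\infty/(n\mathcal{A})$ and the split at $\delta_n=n^{-\frac{1+\alpha}{2+\alpha}}$ are exactly the paper's $V_2$ and $V_{1,1}\vee V_{1,2}$; inserting $h(\frac{k}{n})$ instead of splitting $|t-x|\le|x-\frac{k}{n}|+|\frac{k}{n}-t|$ changes nothing.

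Delete the first bullet list, because the claimed inequality $n^{-1/(2+\alpha)}\le n^{-\frac{1+\alpha}{2+\alpha}}$ is false for $n\ge 2$.

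Your doubt about $\tilde B\ge B$ in (\ref{!!}) is justified. In addition, the paper's derivation of (\ref{!!}) performs the substitution $y=nx-k$ inside $\bigvee_k$, which is not legitimate. Both defects disappear if you do two things. First, bound $\bigvee_k$ by $\sum_k$ before substituting. Second, use $\phi_\sigma(y)\le m_{1+\alpha}(\phi_\sigma)|y|^{-(1+\alpha)}$ for all $y\neq 0$, which follows from the definition of $m_{1+\alpha}$ with $k=0$ and Lemma \ref{lemf}. This gives (\ref{!!}) for every $h$, with $A$ replaced by $M=(b-a)\,m_{1+\alpha}(\phi_\sigma)$, and hence exactly the stated $S$.
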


\begin{proof}
We know from the triangle inequality in $L^{p}$-spaces that
\[
||D_{n}^{(m)}(f)-f||_{p}\leq||D_{n}^{(m)}(f)-D_{n}^{(m)}(h)||_{p}%
+||D_{n}^{(m)}(h)-h||_{p}+||f-h||_{p}%
\]
holds for all $h\in C^{1}\left(  \left[  a,b\right]  ,\left[  0,1\right]  \right)
$. We also know from (\ref{!!}) that
\[
||D_{n}^{(m)}(f)-D_{n}^{(m)}(h)||_{p}\leq\left(  \frac{2M}{\alpha\phi
_{\sigma}\left(  2\right)  }+\frac{||\chi||_{1}^{p-1}2M_{0}(\chi
)}{\mathcal{A}^{p}}\right)  ^{\frac{1}{p}}||f-h||_{p}^{\frac{\alpha}%
{1+\alpha}}.
\]
On the other hand, we know that
\[
|D_{n}^{(m)}(h;x)-h(x)|\leq D_{n}^{(m)}(|h-h(x)|;x)
\]
holds. Furthermore, from the mean value theorem $|h(t)-h(x)|\leq||h^{\prime
}||_{\infty}|t-x|$. Therefore, by the triangle inequality%
\begin{align*}
|D_{n}^{(m)}(h;x)-h(x)|  &  \leq\bigvee\limits_{k=na}^{nb-1}\frac
{\int\limits_{a}^{b}\chi(nt-k)||h^{\prime}||_{\infty}|t-x|dt}{\int
\limits_{a}^{b}\chi(nt-k)dt}\wedge\dfrac{\phi_{\sigma}\left(  nx-k\right)
}{\bigvee\limits_{d=na}^{nb-1}\phi_{\sigma}\left(  nx-d\right)  }\\
&  \leq\bigvee\limits_{k=na}^{nb-1}\bigg(||h^{\prime}||_{\infty}\frac
{\int\limits_{a}^{b}\chi(nt-k)|x-\frac{k}{n}|dt}{\int\limits_{a}^{b}%
\chi(nt-k)dt}\wedge\dfrac{\phi_{\sigma}\left(  nx-k\right)  }{\bigvee
\limits_{d=na}^{nb-1}\phi_{\sigma}\left(  nx-d\right)  }\bigg)\\
&  +\bigvee\limits_{k=na}^{nb-1}\bigg(||h^{\prime}||_{\infty}\frac
{\int\limits_{a}^{b}\chi(nt-k)|\frac{k}{n}-t|dt}{\int\limits_{a}^{b}%
\chi(nt-k)dt}\wedge\dfrac{\phi_{\sigma}\left(  nx-k\right)  }{\bigvee
\limits_{d=na}^{nb-1}\phi_{\sigma}\left(  nx-d\right)  }\bigg)\\
&  =:V_{1}+V_{2}%
\end{align*}
holds. Now, let us examine the $V_{2}$ part first.
\begin{align*}
V_{2}  &  \leq\bigvee\limits_{k=na}^{nb-1}||h^{\prime}||_{\infty}\frac
{\int\limits_{a}^{b}\chi(nt-k)|\frac{k}{n}-t|dt}{\int\limits_{a}^{b}%
\chi(nt-k)dt}\\
&  \leq\frac{||h^{\prime}||_{\infty}}{n\mathcal{A}}\bigvee\limits_{k=na}%
^{nb-1}n\int\limits_{%
\mathbb{R}
}\chi(nt-k)|nt-k|dt\\
&  =\frac{||h^{\prime}||_{\infty}\tilde{M}_{1}(\chi)}{n\mathcal{A}}%
\end{align*}
In $V_{1}$ part, we have
\begin{align*}
V_{1}  &  =\bigvee\limits_{k=na}^{nb-1}||h^{\prime}||_{\infty}\frac
{\int\limits_{a}^{b}\chi(nt-k)|x-\frac{k}{n}|dt}{\int\limits_{a}^{b}%
\chi(nt-k)dt}\wedge\dfrac{\phi_{\sigma}\left(  nx-k\right)  }{\bigvee
\limits_{d=na}^{nb-1}\phi_{\sigma}\left(  nx-d\right)  }\\
&  =\bigg(\bigvee\limits_{k\in B_{\delta_{n},n}(x)}||h^{\prime}||_{\infty
}\frac{\int\limits_{a}^{b}\chi(nt-k)|x-\frac{k}{n}|dt}{\int\limits_{a}%
^{b}\chi(nt-k)dt}\wedge\dfrac{\phi_{\sigma}\left(  nx-k\right)  }%
{\bigvee\limits_{d=na}^{nb-1}\phi_{\sigma}\left(  nx-d\right)  }\bigg)\\
&  \bigvee\bigg(\bigvee\limits_{k\notin B_{\delta_{n},n}(x)}||h^{\prime
}||_{\infty}\frac{\int\limits_{a}^{b}\chi(nt-k)|x-\frac{k}{n}|dt}%
{\int\limits_{a}^{b}\chi(nt-k)dt}\wedge\dfrac{\phi_{\sigma}\left(
nx-k\right)  }{\bigvee\limits_{d=na}^{nb-1}\phi_{\sigma}\left(  nx-d\right)
}\bigg)\\
&  =V_{1,1}\bigvee V_{1,2}.
\end{align*}
Then
\[
V_{1,2}\leq\bigvee\limits_{k\notin B_{\delta_{n},n}(x)}||h^{\prime}||_{\infty
}\frac{\int\limits_{a}^{b}\chi(nt-k)\delta_{n}dt}{\int\limits_{a}^{b}%
\chi(nt-k)dt}=||h^{\prime}||_{\infty}\delta_{n}%
\]
holds. In $V_{1,1}$, we see that
\begin{align*}
V_{1,1}  &  \leq\bigvee\limits_{k\in B_{\delta_{n},n}(x)}\dfrac{\phi
_{\sigma}\left(  nx-k\right)  }{\phi_{\sigma}(2)} \leq\bigvee\limits_{k\in B_{\delta_{n},n}(x)}\dfrac{\phi_{\sigma}\left(
nx-k\right)  }{\phi_{\sigma}(2)}\frac{|nx-k|^{1+\alpha}}{(n\delta
_{n})^{1+\alpha}}\\&\leq\frac{m_{(1+\alpha)}(\phi_{\sigma})}{\phi_{\sigma}%
(2)}\frac{1}{(n\delta_{n})^{1+\alpha}}.
\end{align*}
Therefore, we have 
\[
|D_{n}^{(m)}(h;x)-h(x)|\leq V_{1}+V_{2}\leq\frac{||h^{\prime}||_{\infty}%
\tilde{M}_{1}(\chi)}{n\mathcal{A}}+\left(  ||h^{\prime}||_{\infty}\delta
_{n}\vee\frac{m_{(1+\alpha)}(\phi_{\sigma})}{\phi_{\sigma}(2)}\frac{1}{(n\delta
	_{n})^{1+\alpha}}\right)\text{.}
\]
In here if we select $\delta_{n}=\frac{1}{n^{\frac{1+\alpha}{2+\alpha}}}$, we obtain
\begin{align*}
|D_{n}^{(m)}(h;x)-h(x)|  &  \leq\frac{1}{n^{\frac{1+\alpha}{2+\alpha}}}\left(
||h^{\prime}||_{\infty}\vee\frac{m_{(1+\alpha)}(\phi_{\sigma})}{\phi_{\sigma
}(2)}\right)  +\frac{||h^{\prime}||_{\infty}\tilde{M}_{1}(\chi)}{\mathcal{A}%
}\frac{1}{n}\\
&  \leq\frac{1}{n^{\frac{1+\alpha}{2+\alpha}}}\left(  ||h^{\prime}||_{\infty
}\vee\frac{m_{(1+\alpha)}(\phi_{\sigma})}{\phi_{\sigma}(2)}\right)
+\frac{||h^{\prime}||_{\infty}\tilde{M}_{1}(\chi)}{\mathcal{A}}\frac{1}{n^{\frac{1+\alpha}{2+\alpha}}}
\end{align*}
Hence, we finally have%
\begin{align*}
&  ||D_{n}^{(m)}(f)-f||_{p}\\
&  \leq\bigg(\bigg[\frac{2M}{\alpha\phi_{\sigma}\left(  2\right)  }%
+\frac{||\chi||_{1}^{p-1}2M_{0}(\chi)}{\mathcal{A}^{p}}\bigg]^{\frac{1}{p}%
}+(b-a)^{\frac{1}{p(1+\alpha)}}\bigg)||f-h||_{p}^{\frac{\alpha}{1+\alpha}}\\
&  +||D_{n}^{(m)}(h)-h||_{p}\\
&  \leq\bigg(\bigg[\frac{2M}{\alpha\phi_{\sigma}\left(  2\right)  }%
+\frac{||\chi||_{1}^{p-1}2M_{0}(\chi)}{\mathcal{A}^{p}}\bigg]^{\frac{1}{p}%
}+(b-a)^{\frac{1}{p(1+\alpha)}}\bigg)||f-h||_{p}^{\frac{\alpha}{1+\alpha}}\\
&  +\bigg[\frac{1}{n^{\frac{1+\alpha}{2+\alpha}}}\bigg(||h^{\prime}||_{\infty
}\vee\frac{m_{(1+\alpha)}(\phi_{\sigma})}{\phi_{\sigma}(2)}\bigg)+\frac
{||h^{\prime}||_{\infty}\tilde{M}_{1}(\chi)}{n^{\frac{1+\alpha}{2+\alpha}%
}\mathcal{A}}\bigg](b-a)^{\frac{1}{p}}\\
&  =\bigg[\bigg(\frac{2M}{\alpha\phi_{\sigma}\left(  2\right)  }+\frac
{||\chi||_{1}^{p-1}2M_{0}(\chi)}{\mathcal{A}^{p}}\bigg)^{\frac{1}{p}%
}+(b-a)^{\frac{1}{p(1+\alpha)}}\bigg]||f-h||_{p}^{\frac{\alpha}{1+\alpha}}\\
&  +\frac{(b-a)^{\frac{1}{p}}}{n^{\frac{1+\alpha}{2+\alpha}}}\bigg(||h^{\prime
}||_{\infty}\vee\frac{m_{(1+\alpha)}(\phi_{\sigma})}{\phi_{\sigma}%
(2)}\bigg)+\frac{||h^{\prime}||_{\infty}\tilde{M}_{1}(\chi)(b-a)^{\frac{1}{p}%
}}{n^{\frac{1+\alpha}{2+\alpha}}\mathcal{A}}%
\end{align*}
In this part if we select $S=\left(  \frac{2M}{\alpha\phi_{\sigma}\left(
2\right)  }+\frac{||\chi||_{1}^{p-1}2M_{0}(\chi)}{\mathcal{A}^{p}}\right)
^{\frac{1}{p}}+(b-a)^{\frac{1}{p(1+\alpha)}},$ and \\$T=\frac{\tilde
{M}_{1}(\chi)(b-a)^{\frac{1}{p}}}{S\mathcal{A}}$, then we have
\[
||D_{n}^{(m)}(f)-f||_{p}\leq S\mathcal{K}\left(  f,Tn^{-\frac{1+\alpha
}{2+\alpha}}\right)  _{p}+\frac{(b-a)^{\frac{1}{p}}}{n^{\frac{1+\alpha
}{2+\alpha}}}\left(  \frac{m_{\left(  1+\alpha\right)  }\left(  \phi_{\sigma
}\right)  }{\phi_{\sigma}\left(  2\right)  }\vee||h^{\prime}||_{\infty
}\right)
\]
Thus, the proof is completed.
\end{proof}

\section{Applications}

\label{sec5}

In this section, firtly, we present common sigmoidal function examples that satisfy the
assumptions of our theory and then we examine the application areas of Durrmeyer-type
max-min neural network operators. We
also evaluate the performance of these operators on corrupted data. In addition, we present the approximation error of the
operator, comparing it with the existing operators mentioned in the
introduction of the study. Now let us introduce some well-known sigmoidal
functions below.

The logistic and hyperbolic tangent functions are two well-known examples of smooth sigmoidal functions that serve as non-compactly supported kernels $\phi_{\sigma}$
\[
\sigma_{l}:=\frac{1}{1+e^{-x}}\text{ and }\sigma_{h}:=\frac{1}{2}\left(  \tanh
x+1\right)  \text{ \ \ }\left(  x\in\mathbb{R}\right)
\]
respectively. Furthermore, these sigmoidal functions satisfy the $\left(
\Sigma3\right)  $ condition for every $\alpha>0$. Examples of non-smooth
sigmoidal functions with a compact support kernel ($\phi_{\sigma}$) structure
are the discontinuous step function (see \cite{cost2}) and the ramp function
(see \cite{cao2, cheang}) respectively, defined in the context of Remark \ref{rem0}. 
{\small\[
\sigma_{three}:=\left\{
\begin{array}
[c]{ll}%
0, & \text{if }x<-1/2\\
1/2, & \text{if }-1/2\leq x\leq1/2\\
1, & \text{if }x>1/2
\end{array}
\right.  \text{ and }\sigma_{R}:=\left\{
\begin{array}
[c]{ll}%
0, & \text{if }x<-1/2\\
x+1/2, & \text{if }-1/2\leq x\leq1/2\\
1, & \text{if }x>1/2
\end{array}
\right.
\]}
It can be easily
observed that both functions ($\sigma_{three}$ and $\sigma_{R}$) satisfy the
$\left(  \Sigma3\right)  $ condition for every $\alpha>0$. 

Now, we observe the results of our operator $D_{n}^{\left(  m\right)  }$ in
the applications and compare them with other known NN operators. The
discontinuous function $f:\left[  0,1\right]  \rightarrow\left[  0,1\right]  $
used for comparison is defined as follows:%
\[
f\left(  x\right)  :=\left\{
\begin{array}
[c]{ll}%
0.25, & \text{if }0\leq x\leq0.15\\
0.72, & \text{if }0.15<x\leq0.4\\
0.35, & \text{if }0.40<x\leq0.7\\
0.55, & \text{if }0.70<x\leq1
\end{array}
\right.  \text{.}%
\]
First, we aim to analyze the approximation behavior of the operators $F_{n}^{(m)}$, $K_{n}^{(m)}$, and the Durrmeyer-type max-min NN operator $D_{n}^{(m)}$ for this discontinuous function $f$. If we take $n=200$, $\sigma=\sigma
_{l}\,\ $and $\chi(x)=\frac{1}{1+x^{2}}$, we obtain the result shown in Figure
\ref{fig1}. 
\begin{figure}[h]
\centering
\includegraphics[width=.9\linewidth]{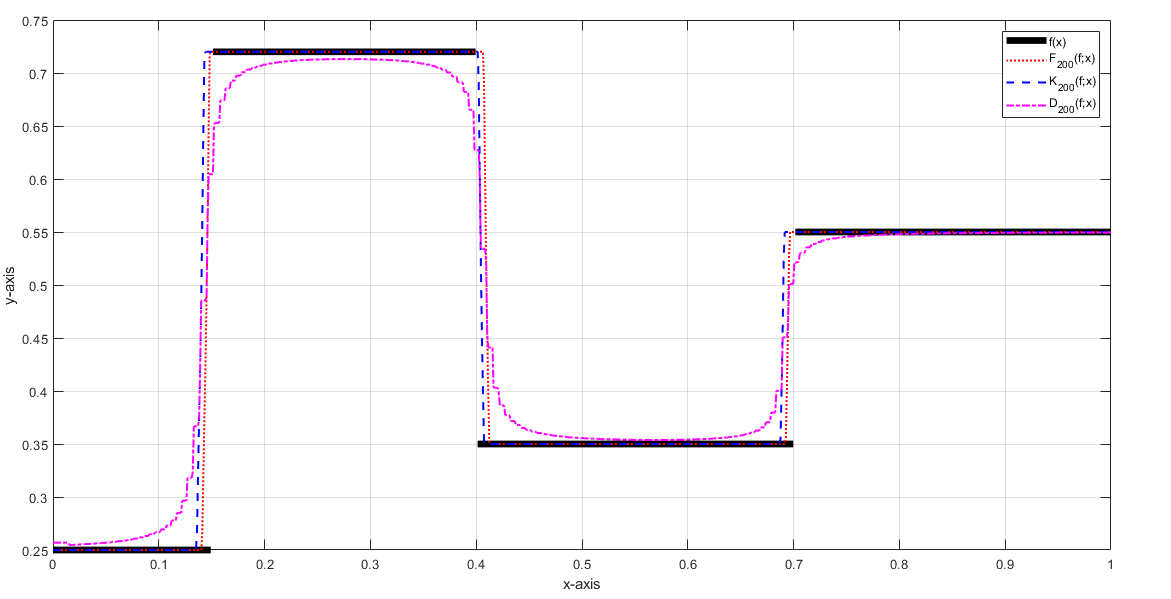} \caption{Approximations by
$D_{n}^{(m)}(f)$, $F_{n}^{(m)}(f)$ and $K_{n}^{(m)}(f)$.}%
\label{fig1}%
\end{figure}

\noindent From the graph in Figure \ref{fig1}, it can be observed that all
operators generally capture the correct values in regions close to the
constant regions of the piecewise function $f$, but that the behavior of the
$D_{n}^{\left(  m\right)  }$ operator is significantly smoother than standart max-min
and Kantorovich max-min NN operators. It was observed that the $F_{n}^{(m)}$
and $K_{n}^{(m)}$ operators produced relatively sharper transitions,
especially at critical points, in contrast, $D_{n}^{\left(  m\right)  }$
exhibited wider ramps and smoother corner transitions at the same points. This
situation can be interpreted as an indication that $D_{n}^{\left(  m\right)
}$ provides a weighted-average-based approach rather than a point-based one.
In Durrmeyer-type max-min operators, weighted integrals are taken for each
cell using $\chi$, this process naturally creates a local averaging/filtering effect.

Additionally we would like to introduce some error types that we obtained from
these operators. These are, in order, maximum error (ME), mean absolute error
(MAE), and mean squared error (MSE). In this process, $8000$ sampling point is taken  from $[0,1]$ interval. The error table (Table \ref{T1}) obtained
for the values mentioned above is provided below. 

\begin{table}[h]
\caption{Comparison of Approximation Errors}%
\label{T1}
\centering
\begin{tabular}
[c]{c|c|c|c|c}%
\textbf{NN operators} & \textbf{ME} & \textbf{MAE} & \textbf{MSE} & \\\hline
$D_{200}^{(m)}(f)$ & 0.354421 & 0.017836 & 0.001928 & \\
$K_{200}^{(m)}(f)$ & 0.470000 & 0.008435 & 0.002813 & \\
$F_{200}^{(m)}(f)$ & 0.470000 & 0.006937 & 0.002194 & 
\end{tabular}
\end{table}

When the provided error measures are examined, it becomes evident that there
are notable differences among the approximation performances of the
$D_{n}^{\left(  m\right)  }$, $K_{n}^{(m)}$ and
$F_{n}^{(m)}$. An analysis of the maximum errors shows that
$D_{n}^{\left(  m\right)  }$ operator produces a lower worst-case deviation
compared to the other two operators, indicating a more balanced behavior that
avoids large spikes in the error distribution. However, when the mean absolute
error is considered, the $F_{n}^{(m)}$
operator yields the most accurate results on average, suggesting that although
it may occasionally produce larger deviations, it generally maintains a lower
mean error across the domain. The mean squared error values (MSE) further
reveal that the $D_{n}^{\left(  m\right)  }$ operator performs best with
respect to this metric, reflecting a more stable error structure under a
criterion that penalizes larger deviations more heavily. In contrast, the
$K_{n}^{(m)}$ operator demonstrates weaker performance in terms of both
maximum error and MSE, although it achieves a reasonable average error level,
it appears more sensitive to the presence of large deviations. Overall, these
findings indicate that the $F_{n}^{(m)}$ operator is preferable in
applications where average accuracy is prioritized, whereas the $D_{n}%
^{\left(  m\right)  }$ operator provides a more balanced and reliable
performance in scenarios where minimizing worst-case errors or squared
deviations is essential. The $K_{n}^{(m)}$ operator occupies an intermediate
position between these two behaviors but remains comparatively more vulnerable
when large errors dominate the approximation process.

\vspace{2mm}

\textbf{\noindent Comparative Analysis of Denoising Performance of Max-Min\\
Durrmeyer-Type Neural Network Operators.} Previous studies have demonstrated
that max--min type neural network operators outperform their max--product
counterparts in various filtering tasks (see \cite{is}, \cite{aslan2}).
Building on these findings, this section provides a comprehensive performance
evaluation of max--min based neural network operators, specifically the
sampling max-min kind ($F_{n}^{(m)}$), Kantorovich-type ($K_{n}^{(m)}$) and
Durrmeyer-type ($D_{n}^{\left(  m\right)  }$) formulations in the context of
signal denoising under different noise conditions. The comparative analysis
investigates their filtering capabilities against both Gaussian and salt and
pepper noise, supported by visual and numerical assessments. First, we introduce the function $g$, to which noise will be added.
\[
g(x)=0.45+0.25\sin(8\pi x)
\]
 Next, max-min NN operators are applied to filter the noisy function. For salt and pepper noise scenarios, the experimental setup employed $n=8000$
sampling points with $\chi(x)=1/(1+0.001x^{2})$ and $\phi
_{\sigma_{h}}(x)=\frac{1}{2}\left(  \sigma_{h}(0.05(x+1))-\sigma
_{h}(0.05(x-1))\right)  $. The results obtained are shown in Figure \ref{figg2}.
\begin{figure}[h]
\centering
\includegraphics[width=1\linewidth]{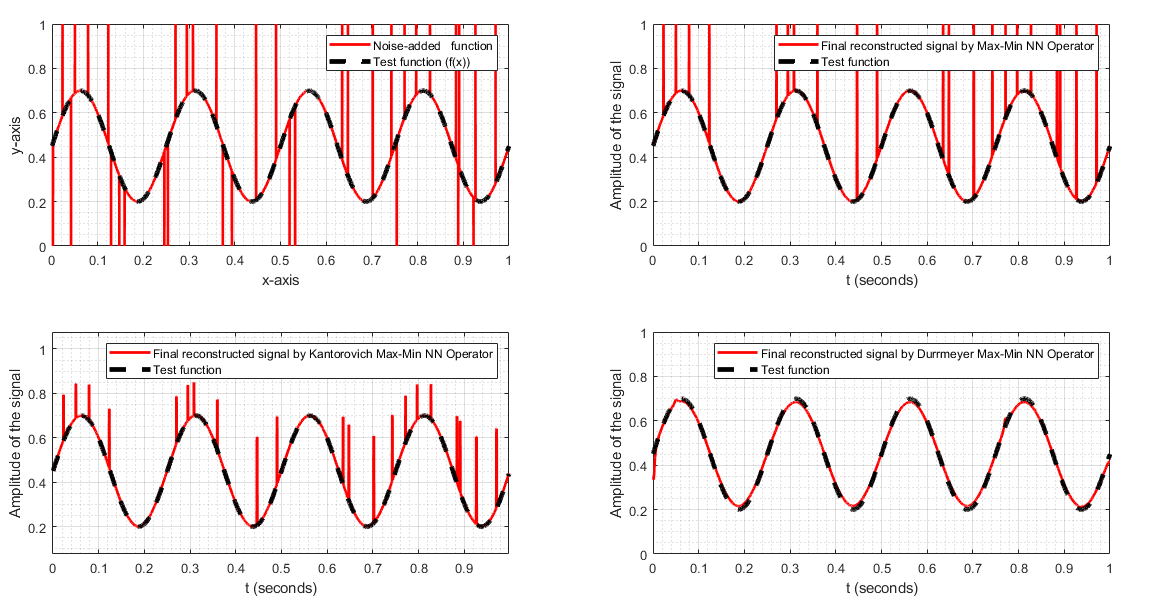}\caption{Comparison of the filtering capabilities of max-min type NN operators on signals corrupted by salt and pepper noise.}%
\label{figg2}%
\end{figure}
It is observed that when the max-min NN operator and the
Kantorovich max-min NN operators are applied once to the noisy
function, denoted by $\dddot{g}$, pepper-type noise is effectively removed, whereas salt-type
noise cannot be completely filtered. On the other hand,
Durrmeyer-type max-min NN operator ($D_{n}^{(m)}$) is capable of suppressing
both types of noise. Maximum approximation errors
obtained from these approximations are provided in Table \ref{T4}. As shown in the table, $D_{n}^{(m)}$ has the lowest
maximum error, indicating that it is the most effective operator for filtering
salt and pepper noise. $K_{n}^{(m)}$ and $F_{n}^{(m)}$ show significantly
weaker performances. However, this limitation can be overcome
with a small methodological adjustment (see also \cite{is}). Specifically, let $h$ denote the function obtained after the first step, where the pepper noise is eliminated by the corresponding operator $F_{n}^{(m)}$ or $K_{n}^{(m)}$. Subsequently, applying the $F_{n}^{(m)}$ or $K_{n}^{(m)}$ operators to the function $1-h$ enables the removal of the remaining salt noise. 
\begin{table}[h]
	\caption{Comparison of Maximum Errors for Salt and Pepper Filtering}%
	\label{T4}
	\centering
	\begin{tabular}
		[c]{c|c}%
		\textbf{NN operator} & \textbf{Maximum Error}\\\hline
		$D_{8000}^{(m)}(\dddot{g})$ & 0.063663\\
		$K_{8000}^{(m)}(\dddot{g})$ & 0.748019\\
		$F_{8000}^{(m)}(\dddot{g})$ & 0.797163
	\end{tabular}
\end{table} However, since $D_{n}^{(m)}$ filtered the noisy signal simultaneously in a single approximation step, the overall
computational time is significantly reduced. Even when the computational
complexity introduced by the $\chi$ function is taken into account, a
substantial decrease in processing time is still achieved. The processing
times reported in Table \ref{T2} clearly confirm this improvement.
\begin{table}[h]
\caption{Comparison of max-min type NN Operators' Computational Complexity}%
\label{T2}
\centering
\begin{tabular}
[c]{c|c}%
\textbf{NN operator} & \textbf{Computation Time}\\\hline
$D_{8000}^{(m)}(\dddot{g})$ & 1.949426 seconds\\
$1-K_{8000}^{(m)}(1-K_{8000}(\dddot{g}))$ & 4.265475 seconds\\
$1-F_{8000}^{(m)}(1-F_{8000}(\dddot{g}))$ & 3.457160 seconds
\end{tabular}
\end{table}We would like to note that the calculation times may vary depending
on the hardware power of the computer used, but in any case, the calculation
time for the $D_{n}^{(m)}$ operator will be shorter than that of the other two operators.

Now let us consider the Gaussian noise situation. In this situation the
experimental setup employed $n=8000$ sampling points with $\chi
(x)=1/(1+0.002x^{2})$, a rescaled logistic sigmoidal activation function
($\sigma(x)=1/(1+e^{-10x}$) and $\phi_{\sigma}(x)=\frac{1}{2}\left(
\sigma(0.05(x+1))-\sigma(0.05(x-1))\right)  $ was used. The results obtained
are shown in Figure \ref{3fig}.
\begin{figure}[h]
\centering
\includegraphics[width=1\linewidth]{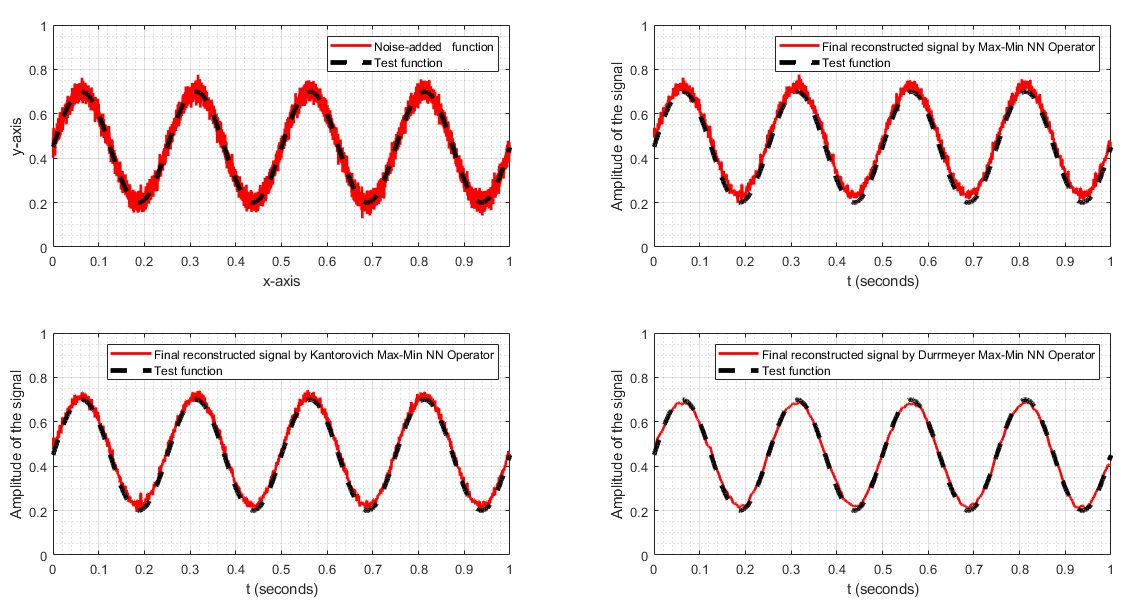}\caption{Comparison of the filtering capabilities of max-min type NN operators on signals corrupted by Gaussian noise.}%
\label{3fig}
\end{figure}
The presented graphs clearly illustrate the filtering performance of the
max-min NN, Kantorovich max-min NN, and Durrmeyer max-min NN operators on a signal
corrupted solely by Gaussian noise. Examination of the noisy function
$\tilde{g}$ shows that the signal contains dense random fluctuations, with
high-frequency oscillations constituting the dominant form of distortion. When
the max-min NN operator is applied, the signal becomes noticeably smoother
and generally approaches the test function, however, a complete suppression of
noise is not achieved, and oscillatory deviations, particularly around peak
regions, remain visible. The Kantorovich max-min NN operator provides a more
balanced improvement compared to the max-min version, as its
averaging-based structure results in a smoother reconstructed signal overall.
Among the three methods, the Durrmeyer max-min operator clearly yields the
best performance. The output produced by this operator corresponds to the test
function with very high accuracy in terms of both amplitude (the magnitude of
the signal) and phase (the temporal position or horizontal shift of the
signal). The remarkable preservation of the waveform both at peak locations
and transition regions demonstrates the superior capability of the Durrmeyer
operator in filtering Gaussian noise. Overall, the results show that while the
max-min NN and Kantorovich max-min NN operators are effective in reducing
Gaussian noise, the Durrmeyer form is the method that provides the most
accurate and most stable reconstruction in a single step. The error approaches
obtained additionally are also provided in Table \ref{T5}. \begin{table}[h]
\caption{Comparison of Approximation Errors in Filtering Gaussian Noise}%
\label{T5}
\centering
\begin{tabular}
[c]{c|c|c|c}%
\textbf{NN operators} & \textbf{ME} & \textbf{MAE} & \textbf{MSE}\\\hline
$D_{8000}^{(m)}(\tilde{g})$ & 0.043498 & 0.011138 & 0.000167\\
$K_{8000}^{(m)}(\tilde{g})$ & 0.078481 & 0.020367 & 0.000533\\
$F_{8000}^{(m)}(\tilde{g})$ & 0.082371 & 0.031181 & 0.001178\\
\end{tabular}
\end{table}As is clearly evident, the $D_{n}^{(m)}$ operator is also clearly
the most advantageous in Gaussian noise filtering.

In the final part of our study, the noise reduction performance of
Durrmeyer-type max--min neural network operators on speech signals is
investigated through an experiment. For experimental evaluation, the Turkish
word \textquotedblleft merhaba\textquotedblright, which means "hello" in
English, was recorded, and the resulting raw speech signal was imported into
the MATLAB environment. Initially, the recorded original speech signal was
considered as the reference signal. Subsequently, salt and pepper noise was
added to the signal in order to represent realistic and abrupt distortions. Such noise types pose significant
challenges for classical linear filtering methods.

The noise-contaminated speech signal was then processed using the proposed
Durrmeyer-type max-min NN operator. Owing to its
nonlinear structure, this operator provides an effective approach against
impulsive noise and aims to suppress noise while preserving the fundamental
time--amplitude characteristics of the speech signal.
\begin{figure}[ptbh]
\centering
\includegraphics[width=1\linewidth]{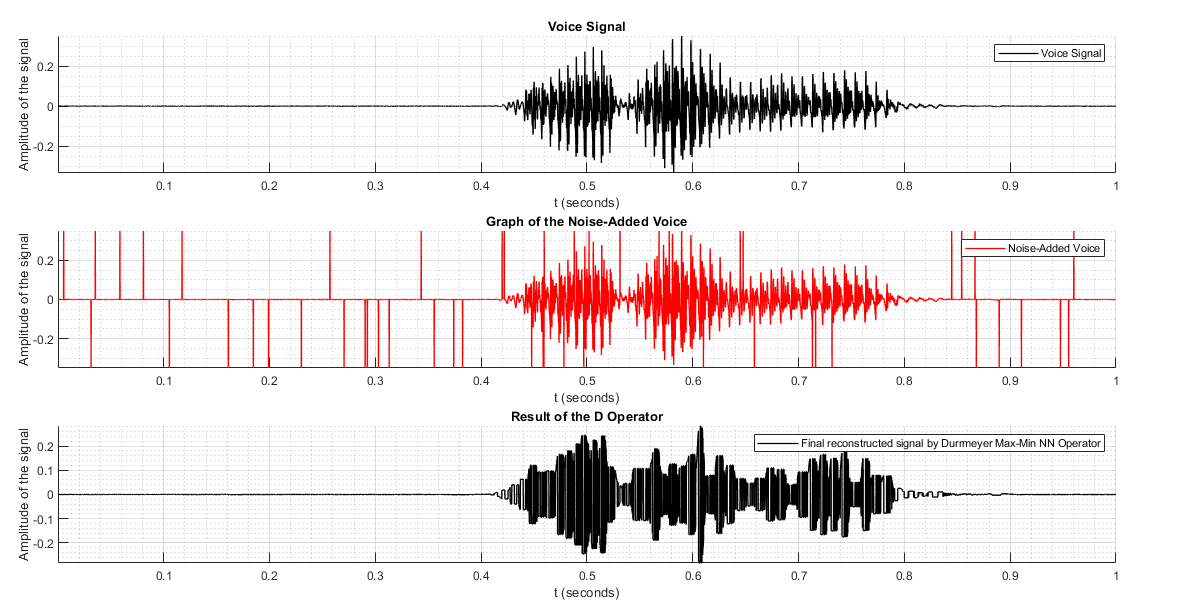} \caption{Recorded speech
and filtering result. Top: Original recorded waveform of the Turkish word
\textquotedblleft merhaba\textquotedblright. Middle: The same signal after
contamination with salt and pepper noise. Bottom: Reconstructed (filtered)
signal obtained by applying the Durrmeyer-type maximum-minimum neural network
operator.}%
\label{fig:tuzbiber}%
\end{figure}
Based on the provided Figure \ref{fig:tuzbiber}, it can be observed that the
proposed operator largely suppresses noise-induced abrupt amplitude
distortions, while successfully preserving the time-amplitude structure of
the speech signal. In particular, signal continuity is maintained during time
intervals where speech activity is present, whereas impulsive noise components
occurring in silent regions are effectively eliminated.

These experimental results indicate that Durrmeyer-type maximum--minimum
neural network operators are an effective tool for removing noise, and offer a
successful filtering approach for speech signal processing applications.

\section{Concluding Remarks}

This research presents a comprehensive investigation into Durrmeyer-type max-min neural network operators, focusing particularly on their convergence properties and practical applications. The theoretical
research is supported by numerical experiments that validate established
analytical findings and demonstrate some of the practical benefits of these
operators in noise reduction applications for corrupted signals. In
conclusion, Durrmeyer-type max-min NN operators provide significantly improved
noise reduction performance compared to both Kantorovich-type max-min NN and standart
max-min NN operators applications. Since many signals, such
as images, videos etc. are multivariable, our next aim is to study
multivariate case of Durrmeyer type max-min NN operators.

\section*{Acknowledgements}

The authors are funded by the Scientific and Technological Research Council of
T\"{u}rkiye (T\"{U}B\.{I}TAK) under grant no. 124F356. This study is derived from the Master's thesis of Berke \c{S}ahin.

\medskip

\textbf{Berke \c{S}ahin}

Hacettepe University

Department of Mathematics,

\c{C}ankaya TR-06800, Ankara, T\"{u}rkiye

E-mail: berke.sahin@hacettepe.edu.tr

\bigskip

\textbf{\.{I}smail Aslan}

Hacettepe University

Department of Mathematics,

\c{C}ankaya TR-06800, Ankara, T\"{u}rkiye

E-mail: ismail-aslan@hacettepe.edu.tr

\bigskip

\bigskip


\begin{thebibliography}{99}                                                                                               %
\bibitem {anas7}Anastassiou, G. A. (2019). Approximation by multivariate
sublinear and max-product operators. Revista de la Real Academia de Ciencias
Exactas, F\'{\i}sicas y Naturales. Serie A. Matem\'{a}ticas, 113, 507-540.

\bibitem {is}Aslan, I. (2025). Approximation by Max-Min Neural Network
Operators. Numerical Functional Analysis and Optimization, 46(4-5), 374-393.

\bibitem {aslan2}Aslan, \.{I}., De Marchi, S., \& Erb, W. (2024). $L^{p}%
$-convergence of Kantorovich-type Max-Min Neural Network Operators. arXiv
preprint arXiv:2407.03329.

\bibitem {bede3}Bede, B., Coroianu, L., \& Gal, S. G. (2009). Approximation
and shape preserving properties of the Bernstein operator of max-product kind.
International journal of mathematics and mathematical sciences, 2009.

\bibitem {bede4}Bede, B., Coroianu, L., \& Gal, S. G. (2010). Approximation
and shape preserving properties of the nonlinear Favard-Szasz-Mirakjan
operator of max-product kind. Filomat, 24(3), 55-72.

\bibitem {bede5}Bede, B., Coroianu, L., \& Gal, S. G. (2016). Approximation by
max-product type operators. Heidelberg: Springer.

\bibitem {bede1}Bede, B., Nobuhara, H., Da\v{n}kov\'{a}, M., \& Di Nola, A.
(2008). Approximation by pseudo-linear operators. Fuzzy Sets and Systems,
159(7), 804-820.

\bibitem {bede2}Bede, B., Schwab, E. D., Nobuhara, H., \& Rudas, I. J. (2009).
Approximation by Shepard type pseudo-linear operators and applications to
image processing. International journal of approximate reasoning, 50(1), 21-36.

\bibitem {butzer}Butzer, P. L., \& Berens, H. (2013). Semi-groups of operators
and approximation (Vol. 145). Springer Science \& Business Media.

\bibitem {durrmeyer}Coroianu, L., Costarelli, D., Natale, M., \& Panti\c{s},
A. (2024). The approximation capabilities of Durrmeyer-type neural network
operators. Journal of Applied Mathematics and Computing, 70(5), 4581-4599.

\bibitem {costez}Costarelli, D, (2014). Sigmoidal functions approximation and
applications (Ph.D. thesis), Roma Tre University, Rome, Italy.


\bibitem {cost1}Costarelli, D., \& Spigler, R. (2013). Approximation results
for neural network operators activated by sigmoidal functions. Neural
Networks, 44, 101-106.

\bibitem {cost2}Costarelli, D., \& Vinti, G. (2016). Max-product neural
network and quasi-interpolation operators activated by sigmoidal functions.
Journal of Approximation Theory, 209, 1-22.

\bibitem {cost25}Costarelli, D., \& Vinti, G. (2016). Approximation by
max-product neural network operators of Kantorovich type. Results in
Mathematics, 69, 505-519.

\bibitem {carda}Cardaliaguet, P., \& Euvrard, G. (1992). Approximation of a
function and its derivative with a neural network. Neural networks, 5(2), 207-220.

\bibitem {cheang}Cheang, G. H. (2010). Approximation with neural networks
activated by ramp sigmoids. Journal of Approximation Theory, 162(8), 1450-1465.

\bibitem {cao}Chen, Z., \& Cao, F. (2009). The approximation operators with
sigmoidal functions. Computers \& Mathematics with Applications, 58(4), 758-765.

\bibitem {cao2}Chen, Z., \& Cao, F. (2012). The construction and approximation
of a class of neural networks operators with ramp functions. Journal of
Computational Analysis and Applications, 14(1), 101-112.

\bibitem {coroianu4}Coroianu, L., Costarelli, D., \& Kadak, U. (2022).
Quantitative estimates for neural network operators implied by the asymptotic
behaviour of the sigmoidal activation functions. Mediterranean Journal of
Mathematics, 19(5), 211.

\bibitem {coroianu1}Coroianu, L., Costarelli, D., Gal, S. G., \& Vinti, G.
(2019). The max-product generalized sampling operators: convergence and
quantitative estimates. Applied Mathematics and Computation, 355, 173-183.

\bibitem {coroianu2}Coroianu, L., \& Gal, S. G. (2018). Approximation by
truncated max-product operators of Kantorovich-type based on generalized
($\Phi,\Psi$)-kernels. Mathematical Methods in the Applied Sciences, 41(17), 7971-7984.

\bibitem {coroianu3}Coroianu, L., \& Gal, S. G. (2022). New approximation
properties of the Bernstein max-min operators and Bernstein max-product
operators. Mathematical Foundations of Computing, 5(3), 259-268.

\bibitem {duman}Duman, O. (2024). Max-product Shepard operators based on
multivariable Taylor polynomials. Journal of Computational and Applied
Mathematics, 437, 115456.

\bibitem {aslan}G\"{o}k\c{c}er, T. Y., \& Aslan, \.{I}. (2022). Approximation
by Kantorovich-type max-min operators and its applications. Applied
Mathematics and Computation, 423, 127011.

\bibitem {gokcer2}G\"{o}k\c{c}er, T. Y., \& Duman, O. (2020). Approximation by
max-min operators: A general theory and its applications. Fuzzy Sets and
Systems, 394, 146-161.

\bibitem {holhos1}Holho\c{s}, A. (2018). Weighted approximation of functions
by Meyer--K\"{o}nig and Zeller operators of max-product type. Numerical
Functional Analysis and Optimization, 39(6), 689-703.

\bibitem {kad}Kadak, U. (2022). Multivariate neural network interpolation
operators. Journal of Computational and Applied Mathematics, 414, 114426.

\bibitem {peetre}Peetre, J. (1970). A new approach in interpolation spaces.
Studia Mathematica, 34(1), 23-42.

\bibitem {wu}Wu, Y., and Yu, D. (2025). Approximation by weighted
Durrmeyer-type max-product neural network operators. Demonstratio Mathematica,
58(1), 20250211.
\end{thebibliography}
\end{document}